\documentclass{article}

\usepackage{arxiv}

\usepackage[utf8]{inputenc}
\usepackage[T1]{fontenc}
\usepackage{hyperref}
\usepackage{url}
\usepackage{booktabs}
\usepackage{amsfonts}
\usepackage{nicefrac}
\usepackage{microtype}
\usepackage{graphicx}
\usepackage{amsmath,amssymb}
\usepackage{mathtools}
\usepackage{bm}
\usepackage{mathrsfs}
\usepackage{amsthm}
\usepackage{multirow}
\usepackage{float}
\usepackage{xcolor}
\usepackage{array}
\usepackage{enumitem}
\usepackage{caption}

\theoremstyle{plain}
\newtheorem{theorem}{Theorem}[section]
\newtheorem{lemma}[theorem]{Lemma}
\newtheorem{proposition}[theorem]{Proposition}
\newtheorem{corollary}[theorem]{Corollary}
\theoremstyle{definition}
\newtheorem{definition}[theorem]{Definition}
\newtheorem{remark}[theorem]{Remark}

\newcommand{\R}{\mathbb{R}}
\newcommand{\Z}{\mathbb{Z}}
\newcommand{\Heis}{\mathrm{Heis}_3}
\newcommand{\heis}{\mathfrak{h}_3}
\newcommand{\SH}{S_{\mathrm{H}}}
\newcommand{\Lop}{L_4}
\newcommand{\norm}[1]{\lVert #1 \rVert}
\newcommand{\abs}[1]{\lvert #1 \rvert}
\newcommand{\Dh}{\Delta_h}
\newcommand{\Dhh}{\Delta_h^2}
\DeclareMathOperator{\diag}{diag}

\title{A Heisenberg Subdivision Scheme with Central Smoothness Loss}

\author{
  Hassan Ugail \\
  Centre for Visual Computing and Intelligent Systems \\
  University of Bradford \\
  Bradford, United Kingdom \\
  \And
  Alfonso Carriazo \\
  Department of Geometry and Topology \\
  University of Seville \\
  Seville, Spain \\
}

\begin{document}
\maketitle

\begin{abstract}
We introduce an interpolatory subdivision scheme for control polygons that take
values in the three-dimensional Heisenberg group, the simplest noncommutative
model geometry. The scheme keeps existing points at every refinement step and
inserts new ones by a coordinate rule whose central correction comes from the
group law. The two horizontal coordinates are refined by the classical
four-point scheme of Dyn, Gregory and Levin, while the central coordinate
acquires a closed-form correction built from a signed area of neighbouring
horizontal data. Our main finding concerns the regularity of the limit curve. The horizontal part is exactly the classical four-point limit and inherits its smoothness. The central part behaves very differently. We prove that it converges to a
continuous limit that belongs to the Zygmund class, with a logarithmic modulus
of continuity. Under an explicit and verifiable condition on the central
forcing, this logarithmic bound is sharp, because the scaled first differences
then grow linearly with the refinement level, and the limit fails to be
continuously differentiable. The effect is confirmed numerically. The correction
is harmless at any single refinement step, but its repeated injection at every
scale is what impacts smoothness. The example serves as a caution for nonlinear
and group-valued subdivision, where a geometrically natural correction can
impact regularity.
\end{abstract}

\keywords{noncommutative subdivision \and Heisenberg group \and four-point
scheme \and interpolatory refinement \and Zygmund regularity \and loss of $C^1$
smoothness}

\section{Introduction}\label{sec:intro}

Subdivision schemes are a central tool in approximation theory, geometric
modelling, and computational mathematics. Starting from a discrete control
polygon, they generate refined data by repeated local insertion rules and,
under suitable conditions, converge to smooth limit curves. Among the most
studied examples are interpolatory schemes, which preserve all existing data
and insert new points by stencil averages. In the Euclidean setting, the
four-point scheme of Dyn, Gregory and Levin~\cite{DGL87} is the canonical
example. It is explicit, local, and interpolatory, and its $C^1$ regularity
for parameter $\omega=1/16$ is well understood~\cite{Dyn92, DynLevin02, Zorin00}.

Subdivision is one of several constructive paradigms in computer-aided
geometric design for generating curves and surfaces from sparse control data.
Related traditions include partial differential equation methods for surface
design~\cite{BloorWilson89, BloorWilson90}, interactive and boundary-driven
surface construction~\cite{UgailBW99a, UgailBW99b, KubiesaUW04}, harmonic and
biharmonic B\'{e}zier patches~\cite{MonterdeUgail04, MonterdeUgail06}, and
parameterised geometric modelling across visual computing and
engineering~\cite{GonzalezUgail08, AthanasopoulosUgail09, ShengUgail10, ShengUgail11}.
These methods share with subdivision the aim of producing controlled limiting
geometry from compact descriptions. The present work stays within the
subdivision tradition and asks how a local geometric rule behaves once the
underlying data space is no longer Euclidean.

Many applications involve manifold-valued or group-valued data, for which
component-wise linear refinement is geometrically inappropriate. Nonlinear
subdivision schemes address this by adapting a linear scheme to curved data
while attempting to preserve convergence and smoothness. The proximity framework
of Wallner and Dyn~\cite{WallnerDyn05}, with later developments by Wallner and
Grohs~\cite{Wallner06, Grohs10prox, Grohs10stab}, transfers regularity from a
linear reference to its nonlinear counterpart under a quantitative closeness
assumption. Other intrinsic constructions refine through geodesic averaging or
through global refinement rules on Riemannian
manifolds~\cite{DynSharon17a, DynSharon17b, Wallner14, HuningWallner19}. These
are, for the most part, positive regularity-transfer results.

A complementary strand treats Lie-group and Riemannian curve constructions
built from exponential maps, retractions, geodesic averages, and de
Casteljau-type algorithms~\cite{ParkRavani95, CrouchLeite95, CrouchKunLeite99,
NavYazPol13}, together with spline and B\'{e}zier interpolation on curved
spaces~\cite{NoakesHP89, PopielNoakes07}. Smoothness-equivalence results for
log-exp and single-basepoint schemes show that, under suitable hypotheses, a
nonlinear Lie-group scheme inherits the smoothness of its linear
ancestor~\cite{WallnerNYGrohs07, XieYu08, XieYu10, DuchampXieYu13, NavYazYu11}.
These approaches provide powerful general machinery. Explicit local
interpolatory rules in genuinely noncommutative settings, where the correction
from the group law can be written in closed form and analysed directly, remain
comparatively rare.

Here, we propose and analyse a concrete prototype, namely a noncommutative
interpolatory subdivision scheme in the three-dimensional Heisenberg group
$\Heis$, with group law,
\begin{equation}\label{eq:grouplaw}
  (x,y,z)\cdot(x',y',z') = \bigl(x+x',\;y+y',\;z+z'+\tfrac{1}{2}(xy'-yx')\bigr).
\end{equation}
The group is two-step nilpotent, and its global exponential map is a
polynomial diffeomorphism, which renders every calculation fully explicit. The
refinement rule refines the horizontal variables $(x,y)$ by the classical
four-point mask and augments the central variable $z$ by a closed-form
correction derived from the group law. The noncommutative contribution is
therefore available in closed form and open to direct geometric interpretation.

In this paper, we make four main contributions. We begin by defining an explicit
nonlinear interpolatory operator $\SH$ in $\Heis$ (Definition~\ref{def:scheme}),
whose closed-form correction $R_i^k = \tfrac{1}{2}(x_i^k b_i^k - y_i^k a_i^k)$ is
read off the group law. The refinement map is then described in full. We give
the coordinate formula (Proposition~\ref{prop:formula}), the local Jacobian
(Proposition~\ref{prop:jacobian}), and the block-triangular structure
(Corollary~\ref{cor:triangular}), from which it follows that the horizontal
limit $(X,Y)$ is exactly the classical four-point $C^1$ limit and is independent
of the central data. At the centre of the construction is an exact two-scale
recursion for the scaled central first differences
(Proposition~\ref{prop:erecursion}), $e^{k+1}=S e^k+\hat g^{k+1}$, in which
$S=2D_1$ is the scaled first-difference operator, and the forcing is the
undifferenced scaled correction $\pm 2\cdot 2^k R_i^k$ with alternating sign. Its
nonvanishing limit $2^k R_i^k\to\tfrac14(XY'-YX')$ is identified in
Lemma~\ref{lem:forcing}.

The analytic core of the paper is a regularity theorem stated in three parts.
The central limit converges uniformly to a continuous function
(Theorem~\ref{thm:cont}). Applying the genuinely contractive second-difference
scheme $D_2$ to the recursion, through the exact intertwining
$\Delta S=D_2\Delta$, gives uniformly bounded scaled second differences. The
limit therefore lies in the Zygmund class $\Lambda_*$ with a quantified
$h\log(1/h)$ modulus, and the scaled first differences obey the rigorous upper
bound $\sup_i 2^k\abs{z_{i+1}^k-z_i^k}=O(k)$ (Theorem~\ref{thm:zygmund}). Under
an explicit signed non-cancellation condition~\eqref{eq:noncancel} on the detail
coefficients, imposed on the exact signed sums that reconstruct the scaled
differences, the matching linear lower bound $\Theta(k)$ holds, so the limit is
not $C^1$ (Theorem~\ref{thm:nonC1}). This condition is numerically observed to
hold for typical nondegenerate data. A naive transplant of a $C^1$ Euclidean
mask through the group law therefore destroys central-channel smoothness.

The novelty lies neither in the use of nonlinear subdivision nor in the use of
group-valued data, both of which are well established. It lies in the explicit
Heisenberg calculation and the negative regularity mechanism it exposes. The
horizontal channels are exactly the classical $C^1$ four-point scheme, while the
central correction produces a forcing whose detail amplitude does not decay
across scales. This yields a Zygmund central limit and, under the stated
condition, excludes $C^1$ regularity. The contribution should be read as a
negative regularity example inside nonlinear subdivision
theory~\cite{WallnerDyn05, Grohs10prox, WallnerNYGrohs07, XieYu10}. It does not
contradict proximity or log-exp smoothness-transfer results. Rather, it
identifies an explicit mechanism outside their hypotheses, in which a correction
that is harmless at any single level destroys regularity once injected
with non-decaying amplitude at every level.

\medskip
The proximity approach of Wallner and Dyn~\cite{WallnerDyn05} transfers $C^1$
regularity from a linear reference to a nonlinear scheme under a proximity
inequality of the form $\norm{Su - \tilde{S}u}_\infty \leq c\,\norm{\Delta
u}_\infty^2$. In our setting, the correction satisfies only a linear bound
$\abs{R_i^k} \leq C_0\norm{\Dh P^k}_\infty$, so proximity does not apply. The
present analysis shows why one should not expect it to. The central channel is
not $C^1$, so no $C^1$ regularity-transfer result could hold. The quadratic
proximity condition is precisely the hypothesis that fails, and its failure is
not a technical obstruction but a faithful reflection of genuine non-smoothness.

Proximity statements also compare a nonlinear scheme with a linear reference in
a fixed coordinate representation. Since our scheme is coordinate-defined and,
as shown in Remark~\ref{rem:groupcompat}, not left-equivariant, the natural
reference is the componentwise linear scheme $\diag(\Lop,\Lop,\Lop)$ in
exponential coordinates. That is the comparison we make. Our analysis works
directly with the exact two-scale recursion for scaled differences and tracks
the nonvanishing forcing it contains.

Our construction differs from existing Lie-group and Riemannian curve schemes in
two respects. It is interpolatory by construction rather than approximating, and
it reads its central correction directly off the group law, which is possible
because the Heisenberg exponential map is polynomial. The correction $R_i^k$ is
the second-order Baker--Campbell--Hausdorff term in exponential coordinates. As
we caution in Remark~\ref{rem:groupcompat}, it does not make $\SH$
left-equivariant.

\section{Background: the four-point scheme}\label{sec:background}

\subsection{The operator and its derived scheme}

Let $\omega\in(0,\tfrac{1}{8})$. For $u=\{u_i\}_{i\in\Z}\in\ell^\infty(\Z)$,
define the \emph{four-point subdivision operator} $\Lop$ by,
\begin{equation}\label{eq:fourpoint}
  (\Lop u)_{2i} = u_i, \qquad
  (\Lop u)_{2i+1} = -\omega u_{i-1} + \bigl(\tfrac{1}{2}+\omega\bigr)u_i
    + \bigl(\tfrac{1}{2}+\omega\bigr)u_{i+1} - \omega u_{i+2}.
\end{equation}
The regularity analysis of $\Lop$ is organised around two associated linear
operators, which we name carefully here because they are easily conflated.

The \emph{first-difference scheme} $D_1$ acts on $v=\Delta u$, $v_i=u_{i+1}-u_i$,
by $(D_1 v)_n=(\Delta\Lop u)_n$. A direct subtraction from~\eqref{eq:fourpoint}
gives,
\begin{equation}\label{eq:firstdiff}
  (D_1 v)_{2i} = \omega v_{i-1} + \tfrac12 v_i - \omega v_{i+1}, \qquad
  (D_1 v)_{2i+1} = -\omega v_{i-1} + \tfrac12 v_i + \omega v_{i+1}.
\end{equation}
Both rows have $\ell^1$-norm $\tfrac12+2\omega=\tfrac58$ at $\omega=1/16$, so
$D_1$ is a bounded operator with $\norm{D_1}_\infty=\tfrac58$.

The \emph{scaled derived operator} $S:=2D_1$ governs the \emph{scaled} first
differences. If $e_i:=2^k v_i$ at level $k$, then one level of refinement maps
$e\mapsto Se$ where,
\begin{equation}\label{eq:scaledderiv}
  (S e)_{2i} = 2\omega e_{i-1} + e_i - 2\omega e_{i+1}, \qquad
  (S e)_{2i+1} = -2\omega e_{i-1} + e_i + 2\omega e_{i+1}.
\end{equation}
Each row of $S$ has signed sum $1$, so $S$ reproduces constants, and both rows have
$\ell^1$-norm $1+4\omega=\tfrac54>1$. \emph{The scaled operator $S$ is not a
contraction.} This is the analytic crux of the paper, and conflating $S$ with a
contractive scheme is precisely the error to avoid.

The \emph{second-difference scheme} $D_2$ is the derived scheme of $S$, i.e.,\
the operator satisfying $\Delta S=D_2\Delta$. With $h:=\Delta e$ one has
$h\mapsto D_2 h$,
\begin{equation}\label{eq:seconddiff}
  (D_2 h)_{2i} = 4\omega\,h_{i-1} + 4\omega\,h_{i}, \qquad
  (D_2 h)_{2i+1} = -2\omega\,h_{i-1} + (1-4\omega)\,h_i - 2\omega\,h_{i+1}.
\end{equation}
At $\omega=1/16$ the masks are $\{\tfrac14,\tfrac14\}$ (even) and
$\{-\tfrac18,\tfrac34,-\tfrac18\}$ (odd). This operator \emph{is} contractive:
although its odd row has $\ell^1$-norm $1$, its asymptotic (joint spectral) rate
is $\mu\approx0.57$ at $\omega=1/16$, and a direct numerical estimate of
$(\norm{D_2^k}_\infty)^{1/k}$ confirms a factor below $3/4$. The intertwining
$\Delta S=D_2\Delta$ is the engine of the Zygmund estimate in
Section~\ref{sec:convergence}, where it converts the marginally stable
$S$-recursion for first differences into a \emph{contractive} recursion for
second differences.

\begin{theorem}[Dyn--Gregory--Levin~\cite{DGL87, Dyn92}]\label{thm:dgl}
For $\omega=1/16$, the scheme $\Lop$ is $C^1$-convergent, i.e., for any bounded
$u^0\in\ell^\infty(\Z)$, the dyadically parametrised piecewise-linear
interpolants of $(\Lop)^k u^0$ converge uniformly on compact intervals to a
$C^1$ function. The second-difference scheme $D_2$ of~\eqref{eq:seconddiff} is
contractive on $\ell^\infty$, i.e., there exist $C_D\geq 1$ and $\mu\in(0,1)$
such that,
\begin{equation}\label{eq:contraction}
  \norm{D_2^k h}_\infty \leq C_D\,\mu^k\norm{h}_\infty
  \qquad\text{for all }h\in\ell^\infty(\Z),\;k\geq 1.
\end{equation}
For the purposes of this paper, it suffices that $\mu<1$. The finite product check
below certifies contractivity with $\mu=0.933$ at $\omega=1/16$.
\end{theorem}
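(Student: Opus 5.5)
The plan has two parts, corresponding to the two claims of Theorem~\ref{thm:dgl}: first the contractivity estimate~\eqref{eq:contraction} for $D_2$, then $C^1$-convergence of $\Lop$ deduced from it by the standard derived-scheme argument.

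\textbf{Contractivity of $D_2$.} This is a finite computation. One row of $D_2$, the odd one at $\omega=1/16$, has $\ell^1$-norm exactly $1$, so a single step does not contract. I will therefore pass to iterates. The operator $D_2^m$ is again a two-scale operator: each of its $2^m$ rows is a finite mask obtained by composing the even and odd masks of~\eqref{eq:seconddiff} along the binary expansion of the row index modulo $2^m$. Its $\ell^\infty$ norm is the maximum $\ell^1$-norm over these finitely many rows. I will compute these rows explicitly, by hand for $m=2$ and by exact rational arithmetic for small $m$, until some $m_0$ gives $q:=\norm{D_2^{m_0}}_\infty<1$.

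Writing $k=jm_0+r$ with $0\le r<m_0$, submultiplicativity gives
\[
\norm{D_2^k}_\infty\le \norm{D_2}_\infty^{r}\,q^{j}\le q^{-1}\bigl(q^{1/m_0}\bigr)^{k},
\]
using $\norm{D_2}_\infty=1$. This yields~\eqref{eq:contraction} with $\mu=q^{1/m_0}$ and $C_D=\max(1,q^{-1})$. The stated value $\mu=0.933$ should come out of such a finite product check, for instance $q=\mu^{m_0}$ for a modest $m_0$.

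The main obstacle is that the $\ell^1$ row sums of $D_2^2$ may still reach $1$, because of the sign pattern $\{-\tfrac18,\tfrac34,-\tfrac18\}$ and the positive even mask. I expect to need several levels before strict cancellation appears. If hand computation becomes unwieldy, my first fallback is a symbol-based argument. The symbol of $D_2$ factors as $a(z)=4\omega(1+z)^2\cdot(\text{const})\cdot(\ldots)$, which is consistent with the four-point symbol containing a $(1+z)^4$ factor. I would then bound the joint spectral radius of the two $3\times3$ transition matrices restricted to the invariant difference subspace.

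\textbf{$C^1$-convergence.} Let $e^k=2^k\Delta\Lop^k u^0$. Then $e^{k+1}=Se^k$ and $\Delta e^{k+1}=D_2\Delta e^k$, so by~\eqref{eq:contraction}
\[
\norm{\Delta e^k}_\infty\le C_D\mu^k\cdot 2\norm{e^0}_\infty .
\]
The even and odd rows of $S$ each reproduce constants, so $(Se)_{2i}-e_i$ and $(Se)_{2i+1}-e_i$ are combinations of neighbouring differences of $e$. Hence $\norm{e^{k+1}_{2i+\epsilon}-e^k_i}\le C\mu^k$. This shows that the piecewise-linear interpolants of $e^k$ form a uniformly Cauchy sequence, converging to a continuous $G$.

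Applying the same reproduction-of-constants argument to $\Lop$ itself, whose first-difference scheme $D_1$ has norm $\tfrac58<1$, gives uniform convergence of the interpolants of $\Lop^k u^0$ to a continuous $F$. Since the difference quotients $e^k$ converge uniformly to $G$, the classical lemma of Dyn~\cite{Dyn92} gives $F'=G$, so $F\in C^1$.

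On compact intervals with bounded data, all bounds are local, because the masks have finite support.
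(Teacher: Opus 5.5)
Your proposal is correct, but it certifies contractivity of $D_2$ by a different route from the paper. The paper does not reprove $C^1$-convergence; it cites Dyn--Gregory--Levin and Dyn's derived-scheme theory. For~\eqref{eq:contraction} it forms the $4\times4$ transition matrices $A_0,A_1$, with $(A_\theta)_{ij}=a_{2i+\theta-j}$, for the mask $(-\tfrac18,\tfrac14,\tfrac34,\tfrac14,-\tfrac18)$. It checks $\max\norm{A_{\theta_1}A_{\theta_2}A_{\theta_3}}_\infty^{1/3}=0.933$ and then invokes the joint-spectral-radius criterion to obtain the operator bound.

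You instead bound the operator norms $\norm{D_2^m}_\infty$ directly and finish by submultiplicativity. This avoids both the JSR criterion and the passage from transition-matrix products to operator norms. You also derive the $C^1$ claim yourself by the standard reproduction-of-constants argument, which is fine.

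Your worry that several levels are needed is unfounded. At $\omega=1/16$ the rows of $D_2^2$ for $n\equiv0,1,2,3 \pmod 4$ are $(-\tfrac1{32},\tfrac14,\tfrac1{32})$, $(\tfrac1{64},\tfrac7{64},\tfrac7{64},\tfrac1{64})$, $(\tfrac1{32},\tfrac14,-\tfrac1{32})$ and $(-\tfrac18,\tfrac12,-\tfrac18)$. Hence $\norm{D_2^2}_\infty=\tfrac34$, and your recipe gives $\mu=\sqrt3/2\approx0.866$ and $C_D=\tfrac43$. This is sharper than the paper's $\mu=0.933$ and implies it.

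The value $0.933$ itself will not emerge from your check. The rows of the paper's $A_\theta$ run over consecutive mask coefficients of both parities, so their norms are not the operator norms of $D_2^L$: they are $1.375$ at length one, against $\norm{D_2}_\infty=1$. Since any smaller certified $\mu$ suffices, this does not matter.

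Your route gives an elementary certificate with an explicit $C_D$, which would make the constant $K_0$ of Theorem~\ref{thm:zygmund} genuinely explicit. The paper's route uses the standard JSR machinery, which also estimates the asymptotic rate.

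Your symbol-based fallback is unnecessary. As phrased, it would control $D_2$ only on the difference subspace, and the eigenvalue $\tfrac12$ of $D_2$ on constants would still have to be handled separately.
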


The constant $\mu$ is an asymptotic rate rather than a one-step $\ell^\infty$
bound, since the odd row of~\eqref{eq:seconddiff} at $\omega=1/16$ has
$\ell^1$-norm $\tfrac18+\tfrac34+\tfrac18=1$, so a single application of $D_2$
need not contract. We certify the contractivity by a finite computation rather
than by a numerical rate estimate. The mask of $D_2$ at $\omega=1/16$ has support
$\{-1,0,1,2,3\}$ with coefficients
$(a_{-1},a_0,a_1,a_2,a_3)=(-\tfrac18,\tfrac14,\tfrac34,\tfrac14,-\tfrac18)$, and
one forms the two $4\times4$ transition matrices $(A_\theta)_{ij}=a_{2i+\theta-j}$,
$\theta\in\{0,1\}$, $0\le i,j\le3$,
\begin{equation}\label{eq:A0A1}
  A_0=\begin{pmatrix}
    \tfrac14 & -\tfrac18 & 0 & 0\\[2pt]
    \tfrac14 & \tfrac34 & \tfrac14 & -\tfrac18\\[2pt]
    0 & -\tfrac18 & \tfrac14 & \tfrac34\\[2pt]
    0 & 0 & 0 & -\tfrac18
  \end{pmatrix},
  \qquad
  A_1=\begin{pmatrix}
    \tfrac34 & \tfrac14 & -\tfrac18 & 0\\[2pt]
    -\tfrac18 & \tfrac14 & \tfrac34 & \tfrac14\\[2pt]
    0 & 0 & -\tfrac18 & \tfrac14\\[2pt]
    0 & 0 & 0 & 0
  \end{pmatrix}.
\end{equation}
The induced $\ell^\infty$ matrix norms of all products of a given length form
the decreasing sequence,
\begin{equation}\label{eq:jsrseq}
  \max_{\theta}\norm{A_\theta}_\infty = 1.375,\quad
  \max_{\theta_1,\theta_2}\norm{A_{\theta_1}A_{\theta_2}}_\infty^{1/2}=1.053,\quad
  \max_{\theta_1,\theta_2,\theta_3}\norm{A_{\theta_1}A_{\theta_2}A_{\theta_3}}_\infty^{1/3}=0.933,
\end{equation}
which decreases towards the joint spectral radius of $\{A_0,A_1\}$, approximately
$0.57$. A single application need not contract, since
$\max_\theta\norm{A_\theta}_\infty=1.375>1$, but the length-three product gives
the certificate,
\begin{equation}\label{eq:jsr}
  \max_{\theta_1,\theta_2,\theta_3\in\{0,1\}}
  \norm{A_{\theta_1}A_{\theta_2}A_{\theta_3}}_\infty^{1/3}
  = 0.933 < 1 .
\end{equation}
By the standard joint-spectral-radius criterion for derived subdivision schemes
(\cite{Dyn92}, Ch.~3), it follows that $D_2$ is contractive, so
that~\eqref{eq:contraction} holds with $\mu=0.933$ and a corresponding $C_D$.
This is the only place contractivity of $D_2$ is used, and~\eqref{eq:jsr}
establishes it by an explicit finite computation.

It is essential to keep the three operators distinct. The operator $D_1$ has norm $\tfrac58$
but is not the object controlling central-channel smoothness. The scaled
operator $S=2D_1$ reproduces constants and is amplitude-preserving on the
alternating sequence, $\norm{S\chi}_\infty=\norm{\chi}_\infty$ for
$\chi_i=(-1)^i$, so it has a \emph{neutral component} in its two-scale transfer
representation and is only marginally stable. It is not a contraction. Only
the second-difference scheme $D_2$ is a genuine contraction. The neutral
component of $S$ is what allows a persistent forcing to accumulate, and is made
precise through the detail coefficients of Lemma~\ref{lem:detail}. We fix
$\omega=1/16$ throughout. The use of difference schemes and derived operators in
this way is standard in subdivision regularity analysis, and we refer
to~\cite{Dyn92, DynLevin02, Zorin00} for the general framework.

We close this section by fixing the notation used throughout. For
$P=\{(x_i,y_i,z_i)\}\subset\R^3$ we write,
\begin{align*}
  \norm{P}_\infty &:= \sup_i\norm{P_i},\quad
  \norm{\Dh P}_\infty := \sup_i\bigl(\abs{x_{i+1}-x_i}+\abs{y_{i+1}-y_i}\bigr),\\
  \norm{\Dhh P}_\infty &:= \sup_i\bigl(\abs{x_{i+1}-2x_i+x_{i-1}}
    +\abs{y_{i+1}-2y_i+y_{i-1}}\bigr).
\end{align*}
We write $\delta_{x,i}=x_{i+1}-x_i$, $\delta_{y,i}=y_{i+1}-y_i$, and
$p_i=(x_i,y_i)\in\R^2$ for the horizontal part of $P_i$.

\section{The Heisenberg group and the subdivision operator}\label{sec:construction}

\subsection{The Heisenberg group}

The group $\Heis$ has underlying set $\R^3$ with multiplication~\eqref{eq:grouplaw}.
It is connected, simply connected, and two-step nilpotent. The identity is
the origin, $(x,y,z)^{-1}=(-x,-y,-z)$, and noncommutativity is measured by
$[(x,y,z),(x',y',z')] = (0,0,xy'-yx')$. The Lie algebra $\heis\cong\R^3$
has basis $\{X,Y,Z\}$ with $[X,Y]=Z$ and all other brackets zero.
Two-step nilpotency ensures that the BCH formula
truncates, i.e., $\exp(A)\exp(B)=\exp(A+B+\tfrac{1}{2}[A,B])$. The exponential
map is a global polynomial diffeomorphism, so $\exp(a,b,c)=(a,b,c)$ in these
coordinates.

The \emph{left-invariant edge increment} between consecutive control points is,
\begin{equation}\label{eq:increment}
  \xi_i := \log(P_i^{-1}\cdot P_{i+1})
    = \bigl(\delta_{x,i},\;\delta_{y,i},\;\delta_{z,i}^H\bigr), \quad
  \delta_{z,i}^H := z_{i+1}-z_i + \tfrac{1}{2}(y_i\delta_{x,i}-x_i\delta_{y,i}).
\end{equation}

We use $\Heis$ as the simplest nonabelian two-step nilpotent test geometry. Its
analytic role in subelliptic estimates and function-space theory on nilpotent
groups is classical~\cite{Folland75}, and refinement and cascade questions on
the Heisenberg group have been studied in harmonic analysis
settings~\cite{LiuLiu06, LiuLiu07}. That body of work concerns refinement
equations and function spaces on the group. Our concern is different. We study a
geometric interpolatory subdivision rule for control polygons that take values
in $\Heis$, with an explicit coordinate-level correction read off the group law.

\subsection{The subdivision operator}

\begin{definition}[Heisenberg four-point operator]\label{def:scheme}
The \emph{Heisenberg four-point subdivision operator} $\SH$ maps
$P^k=\{(x_i^k,y_i^k,z_i^k)\}_{i\in\Z}\subset\Heis$ to
$P^{k+1}=\SH(P^k)$ by,
\begin{enumerate}[label=(\roman*)]
  \item \textbf{Even:} $P_{2i}^{k+1}=P_i^k$.
  \item \textbf{Odd:} $P_{2i+1}^{k+1}=(X_{2i+1},Y_{2i+1},Z_{2i+1})$ where,
    \begin{align}
      X_{2i+1}&=(\Lop x^k)_{2i+1}, \label{eq:Xins}\\
      Y_{2i+1}&=(\Lop y^k)_{2i+1}, \label{eq:Yins}\\
      Z_{2i+1}&=(\Lop z^k)_{2i+1}+R_i^k, \label{eq:Zins}
    \end{align}
    with correction $R_i^k:=\tfrac{1}{2}(x_i^k b_i^k - y_i^k a_i^k)$,
    $a_i^k=X_{2i+1}-x_i^k$, $b_i^k=Y_{2i+1}-y_i^k$.
\end{enumerate}
\end{definition}

\begin{remark}[Origin of the correction, and the (non-)equivariance of the scheme]\label{rem:groupcompat}
The correction $R_i^k$ is the second-order Baker--Campbell--Hausdorff term read
off from the group law. Writing the inserted point as a left translate
$P_{2i+1}^{k+1}=P_i^k\cdot(a_i^k,b_i^k,c_i^k)$ with the horizontal displacement
$(a_i^k,b_i^k)$ fixed by the four-point rule, the law~\eqref{eq:grouplaw}
contributes the cross-term $\tfrac12(x_i^k b_i^k-y_i^k a_i^k)=R_i^k$ to the
central coordinate. Choosing $c_i^k=(\Lop z^k)_{2i+1}-z_i^k$ then
gives~\eqref{eq:Zins}. This is the sense in which the central update is
``derived from the group law.''

We emphasise, however, that $\SH$ as defined is a \emph{coordinate} scheme in
exponential coordinates and is \emph{not} left-equivariant. Under a general left
translation $L_g$, $g=(u,v,w)$, the horizontal data shift by $(u,v)$, and the
correction acquires extra terms $\tfrac12(u\,b_i^k-v\,a_i^k)$. A direct
computation shows $\SH(L_g P)\neq L_g\,\SH(P)$ in general, the discrepancy being
of the same order as the correction itself. The scheme \emph{is} equivariant
under the central one-parameter subgroup $g=(0,0,w)$, which shifts $z$ by $w$
uniformly and commutes with the affine $z$-rule, and it reduces to
$\diag(\Lop,\Lop,\Lop)$ in the abelian limit, where $R_i^k\equiv0$. None of the
analysis below uses left-equivariance. It uses only the explicit coordinate
formulae. The decomposition $\SH=\diag(\Lop,\Lop,\Lop)+N$ is therefore a
statement about coordinate maps, not a proximity relation between an intrinsic
nonlinear scheme and a linear one.
\end{remark}

The present coordinate rule differs from intrinsic log-exp and retraction-based
schemes in a concrete way. In a log-exp construction one maps neighbouring data
into a tangent or Lie-algebra coordinate system, applies a linear mask, and maps
back, after which smoothness-equivalence results compare the nonlinear scheme
with its linear parent~\cite{WallnerNYGrohs07, XieYu10, DuchampXieYu13,
NavYazYu11}. Here the rule is deliberately simpler and more explicit. The
horizontal values are refined linearly, while the central coordinate is
corrected by the Baker--Campbell--Hausdorff term generated by the chosen
basepoint. This simplicity costs left-equivariance, which the scheme does not
possess, but it allows the regularity loss to be tracked exactly, as the
remainder of the paper does.

\section{Explicit formula, Jacobian, and correction estimates}\label{sec:formula}

\subsection{An exact identity for the offset}

The whole analysis rests on the following exact identity, which expresses the
odd-insertion offset $a_i^k:=X_{2i+1}-x_i^k$ in terms of horizontal first
differences $\delta_{x,i}^k=x_{i+1}^k-x_i^k$. The grouping of differences here
is the one that produces the correct scaling limit in
Section~\ref{sec:convergence}.

\begin{lemma}[Difference form of the offset]\label{lem:offset}
For every $\omega\in(0,\tfrac18)$,
\begin{equation}\label{eq:offsetdiff}
  a_i^k = \tfrac12\,\delta_{x,i}^k
        - \omega\,\delta_{x,i+1}^k
        + \omega\,\delta_{x,i-1}^k,
  \qquad
  b_i^k = \tfrac12\,\delta_{y,i}^k
        - \omega\,\delta_{y,i+1}^k
        + \omega\,\delta_{y,i-1}^k.
\end{equation}
In particular the coefficients sum to $\tfrac12$, so under any parametrisation
in which $2^k\delta_{x,j}^k\to X'(t)$ for $j\in\{i-1,i,i+1\}$ one has
$2^k a_i^k\to\tfrac12 X'(t)$, \emph{independently of $\omega$}.
\end{lemma}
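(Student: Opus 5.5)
The identity \eqref{eq:offsetdiff} is purely algebraic, so the plan is to expand both sides as linear combinations of the four stencil values $x_{i-1}^k,x_i^k,x_{i+1}^k,x_{i+2}^k$ and compare coefficients. No earlier result is needed beyond the definition \eqref{eq:Xins} of the odd insertion through the mask \eqref{eq:fourpoint}.

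\textbf{Step 1: expand the left-hand side.} By \eqref{eq:Xins} and \eqref{eq:fourpoint},
\[
a_i^k = (\Lop x^k)_{2i+1}-x_i^k
= -\omega x_{i-1}^k + \bigl(\omega-\tfrac12\bigr)x_i^k + \bigl(\tfrac12+\omega\bigr)x_{i+1}^k - \omega x_{i+2}^k .
\]

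\textbf{Step 2: expand the right-hand side.} Writing out $\delta_{x,j}^k=x_{j+1}^k-x_j^k$ for $j=i-1,i,i+1$ gives
\[
\tfrac12(x_{i+1}^k-x_i^k)-\omega(x_{i+2}^k-x_{i+1}^k)+\omega(x_i^k-x_{i-1}^k).
\]
Collecting terms, the coefficients of $x_{i-1}^k,x_i^k,x_{i+1}^k,x_{i+2}^k$ are $-\omega$, $\omega-\tfrac12$, $\tfrac12+\omega$ and $-\omega$. These agree with Step~1 term by term, for every $\omega$, so the restriction $\omega\in(0,\tfrac18)$ plays no role. The $y$-formula for $b_i^k$ is the same computation with \eqref{eq:Yins} in place of \eqref{eq:Xins}, since the same mask acts on both horizontal coordinates.

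\textbf{Step 3: the scaling consequence.} The coefficients $(\omega,\tfrac12,-\omega)$ of $\delta_{x,i-1}^k,\delta_{x,i}^k,\delta_{x,i+1}^k$ sum to $\tfrac12$. Multiplying \eqref{eq:offsetdiff} by $2^k$ expresses $2^k a_i^k$ as this fixed finite combination of $2^k\delta_{x,j}^k$ for $j\in\{i-1,i,i+1\}$. If each of these three quantities tends to $X'(t)$, linearity of limits gives
\[
2^k a_i^k\to\bigl(\omega+\tfrac12-\omega\bigr)X'(t)=\tfrac12X'(t),
\]
and the $\omega$-dependence cancels. The statement about $b_i^k$ follows identically.

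There is no real obstacle here. The only care needed is the bookkeeping of the index shifts in Step~2, namely that $\delta_{x,i-1}^k$ carries $+\omega$ and $\delta_{x,i+1}^k$ carries $-\omega$, rather than the reverse.
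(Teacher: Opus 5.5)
Your proof is correct and is essentially the paper's argument. The paper substitutes $x_{i-1}^k=x_i^k-\delta_{x,i-1}^k$, $x_{i+1}^k=x_i^k+\delta_{x,i}^k$, $x_{i+2}^k=x_i^k+\delta_{x,i}^k+\delta_{x,i+1}^k$ into the expanded mask and notes that the $x_i^k$ terms cancel, which is the same coefficient comparison you carry out in the opposite direction; the limit statement then follows, as you say, from the coefficient sum $\tfrac12-\omega+\omega=\tfrac12$ and linearity of limits.
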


\begin{proof}
Write $a_i^k=(\Lop x^k)_{2i+1}-x_i^k
=-\omega x_{i-1}^k+(\omega-\tfrac12)x_i^k+(\tfrac12+\omega)x_{i+1}^k
-\omega x_{i+2}^k$ and substitute
$x_{i-1}^k=x_i^k-\delta_{x,i-1}^k$, $x_{i+1}^k=x_i^k+\delta_{x,i}^k$,
$x_{i+2}^k=x_i^k+\delta_{x,i}^k+\delta_{x,i+1}^k$. The terms in $x_i^k$ cancel
and~\eqref{eq:offsetdiff} follows. The case of $b_i^k$ is identical. The
coefficient sum is $\tfrac12-\omega+\omega=\tfrac12$.
\end{proof}

\subsection{Coordinate formula}

\begin{proposition}[Odd-point formula]\label{prop:formula}
Setting $\Omega(p,q)=p_xq_y-p_yq_x$ for $p,q\in\R^2$,
\begin{equation}\label{eq:fullformula}
\boxed{
\begin{aligned}
X_{2i+1}&={-}\omega x_{i-1}^k+(\tfrac{1}{2}{+}\omega)x_i^k
  +(\tfrac{1}{2}{+}\omega)x_{i+1}^k-\omega x_{i+2}^k,\\[2pt]
Y_{2i+1}&={-}\omega y_{i-1}^k+(\tfrac{1}{2}{+}\omega)y_i^k
  +(\tfrac{1}{2}{+}\omega)y_{i+1}^k-\omega y_{i+2}^k,\\[2pt]
Z_{2i+1}&={-}\omega z_{i-1}^k+(\tfrac{1}{2}{+}\omega)z_i^k
  +(\tfrac{1}{2}{+}\omega)z_{i+1}^k-\omega z_{i+2}^k\\
&\quad -\tfrac{\omega}{2}\Omega(p_i^k,p_{i-1}^k)
  +\tfrac{1}{2}(\tfrac{1}{2}{+}\omega)\Omega(p_i^k,p_{i+1}^k)
  -\tfrac{\omega}{2}\Omega(p_i^k,p_{i+2}^k).
\end{aligned}
}
\end{equation}
For $\omega=1/16$, the $Z$-update is,
\begin{align}\label{eq:Zexplicit}
Z_{2i+1}&=-\tfrac{1}{16}z_{i-1}^k+\tfrac{9}{16}z_i^k+\tfrac{9}{16}z_{i+1}^k
  -\tfrac{1}{16}z_{i+2}^k\notag\\
&\quad+\tfrac{1}{32}(y_i^k x_{i-1}^k-x_i^k y_{i-1}^k)
  +\tfrac{9}{32}(x_i^k y_{i+1}^k-y_i^k x_{i+1}^k)
  +\tfrac{1}{32}(y_i^k x_{i+2}^k-x_i^k y_{i+2}^k).
\end{align}
\end{proposition}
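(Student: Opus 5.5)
The $X$ and $Y$ lines of \eqref{eq:fullformula} are simply \eqref{eq:Xins}--\eqref{eq:Yins} with the mask \eqref{eq:fourpoint} written out, so all the work is in the $Z$-line. By \eqref{eq:Zins} it suffices to show that
\[
R_i^k=\tfrac12(x_i^k b_i^k-y_i^k a_i^k)
=-\tfrac{\omega}{2}\Omega(p_i^k,p_{i-1}^k)+\tfrac12(\tfrac12+\omega)\Omega(p_i^k,p_{i+1}^k)-\tfrac{\omega}{2}\Omega(p_i^k,p_{i+2}^k).
\]
To see this, I will write the horizontal offset as a vector, $(a_i^k,b_i^k)=(\Lop p^k)_{2i+1}-p_i^k$. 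Note that $x_i b-y_i a=\Omega(p_i,(a,b))$, and that $\Omega(p_i,\cdot)$ is linear in its second argument. Hence
\[
2R_i^k=\Omega\bigl(p_i^k,(\Lop p^k)_{2i+1}\bigr)-\Omega(p_i^k,p_i^k).
\]

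The last term vanishes because $\Omega$ is antisymmetric. Expanding the first term by linearity with the four-point weights $-\omega,\ \tfrac12+\omega,\ \tfrac12+\omega,\ -\omega$ on $p_{i-1},p_i,p_{i+1},p_{i+2}$, the $p_i$ term again drops out by antisymmetry. The three surviving terms, each multiplied by $\tfrac12$, are exactly the three $\Omega$-terms in the boxed formula. I do not need Lemma~\ref{lem:offset} here; the undifferenced form of the offset is enough.

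For $\omega=1/16$ the weights become $-\tfrac1{16}$, $\tfrac9{16}$, $\tfrac9{16}$, $-\tfrac1{16}$. The $\Omega$-coefficients are $-\tfrac1{32}$, $\tfrac9{32}$ and $-\tfrac1{32}$. I will rewrite $-\tfrac1{32}\Omega(p_i,p_{i-1})=\tfrac1{32}(y_ix_{i-1}-x_iy_{i-1})$, and likewise for the $p_{i+2}$ term, using $\Omega(p,q)=p_xq_y-p_yq_x$. This yields \eqref{eq:Zexplicit}.

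None of these steps is a real obstacle, since the whole argument is bookkeeping. The only place to take care is the sign convention when turning the $\Omega$-terms into coordinate products in \eqref{eq:Zexplicit}.
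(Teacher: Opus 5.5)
Your proof is correct. It reaches the same identity as the paper by a slightly more direct route.

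The paper substitutes the difference form of the offset from Lemma~\ref{lem:offset} into $R_i^k$. In your notation this gives $2R_i^k=\tfrac12\Omega(p_i^k,p_{i+1}^k-p_i^k)-\omega\,\Omega(p_i^k,p_{i+2}^k-p_{i+1}^k)+\omega\,\Omega(p_i^k,p_i^k-p_{i-1}^k)$. The paper then has to regroup these differences into the three $\Omega(p_i^k,p_j^k)$ terms, again using $\Omega(p_i^k,p_i^k)=0$.

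You instead write $2R_i^k=\Omega\bigl(p_i^k,(\Lop p^k)_{2i+1}\bigr)$ with the undifferenced mask. Antisymmetry then removes the $p_i^k$ contribution in one step. The $\Omega$-coefficients are visibly half the mask weights, and you avoid the detour through Lemma~\ref{lem:offset}.

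The paper's choice makes sense across the paper as a whole. The difference form is what it needs anyway for the scaling limit $2^k a_i^k\to\tfrac12 X'$ in Lemma~\ref{lem:forcing}, where the $\pm\omega$ cancellation must be visible. For this proposition alone, your route is the cleaner bookkeeping. Your coefficients and sign conversions at $\omega=1/16$ are also correct.
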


\begin{proof}
By Lemma~\ref{lem:offset},
$a_i^k=\tfrac{1}{2}\delta_{x,i}^k-\omega\delta_{x,i+1}^k+\omega\delta_{x,i-1}^k$
and similarly for $b_i^k$. Substituting into
$R_i^k=\tfrac{1}{2}(x_i^k b_i^k-y_i^k a_i^k)$ and expressing in terms of
$\Omega(p_i^k,p_j^k)$ gives~\eqref{eq:fullformula};~\eqref{eq:Zexplicit}
follows at $\omega=1/16$.
\end{proof}

\subsection{Local Jacobian}

\begin{proposition}[Jacobian]\label{prop:jacobian}
The Jacobian $DF$ of the map $F:(P_{i-1},P_i,P_{i+1},P_{i+2})\mapsto
P_{2i+1}$ has block form $[J_{i-1}\;J_i\;J_{i+1}\;J_{i+2}]$ with
$3\times 3$ blocks,
\begin{align*}
J_{i-1}=J_{i+2}&=
\begin{pmatrix}-\omega&0&0\\0&-\omega&0\\
\tfrac{\omega}{2}y_i^k&-\tfrac{\omega}{2}x_i^k&-\omega\end{pmatrix},\\[4pt]
J_{i+1}&=
\begin{pmatrix}\tfrac{1}{2}{+}\omega&0&0\\0&\tfrac{1}{2}{+}\omega&0\\
-\tfrac{1}{2}(\tfrac{1}{2}{+}\omega)y_i^k
  &\tfrac{1}{2}(\tfrac{1}{2}{+}\omega)x_i^k&\tfrac{1}{2}{+}\omega\end{pmatrix},\\[4pt]
J_i&=
\begin{pmatrix}\tfrac{1}{2}{+}\omega&0&0\\0&\tfrac{1}{2}{+}\omega&0\\
A_i^k&B_i^k&\tfrac{1}{2}{+}\omega\end{pmatrix},
\end{align*}
where $A_i^k=-\tfrac{\omega}{2}y_{i-1}^k+\tfrac{1}{2}(\tfrac{1}{2}+\omega)y_{i+1}^k
-\tfrac{\omega}{2}y_{i+2}^k$ and
$B_i^k=\tfrac{\omega}{2}x_{i-1}^k-\tfrac{1}{2}(\tfrac{1}{2}+\omega)x_{i+1}^k
+\tfrac{\omega}{2}x_{i+2}^k$.
\end{proposition}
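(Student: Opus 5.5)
The plan is a direct differentiation of the closed-form odd-point formula \eqref{eq:fullformula} of Proposition~\ref{prop:formula}. By Definition~\ref{def:scheme}, $P^{k+1}_{2i+1}$ depends only on the four points $P_{i-1}^k,\dots,P_{i+2}^k$, so $DF$ is a $3\times 12$ matrix. I split it into four $3\times3$ blocks $J_j=\partial P^{k+1}_{2i+1}/\partial P_j^k$ for $j=i-1,\dots,i+2$, and compute each block row by row.

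\emph{Horizontal rows.} The first two rows are immediate. $X_{2i+1}$ and $Y_{2i+1}$ are the linear four-point combinations \eqref{eq:Xins}--\eqref{eq:Yins}, with coefficients $(-\omega,\tfrac12+\omega,\tfrac12+\omega,-\omega)$. Neither depends on the $z$-variables, and $X_{2i+1}$ involves only $x$-variables while $Y_{2i+1}$ involves only $y$-variables. Hence each block has upper $2\times 3$ part $\bigl(c_j I_2 \mid 0\bigr)$, where $c_j$ is the corresponding mask coefficient.

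\emph{Central row.} $Z_{2i+1}$ is a linear part $(\Lop z^k)_{2i+1}$ plus the three symplectic terms in \eqref{eq:fullformula}. The linear part contributes $c_j$ in the $(3,3)$ entry of $J_j$, and it is the only part that depends on $z$. For the horizontal entries I use
\[
\Omega(p_i,p_j)=x_iy_j-y_ix_j,\qquad
\partial_{x_j}\Omega(p_i,p_j)=-y_i,\quad \partial_{y_j}\Omega(p_i,p_j)=x_i \quad (j\ne i).
\]
For $j\in\{i-1,i+2\}$ the term carries coefficient $-\omega/2$, which gives the entries $(\tfrac{\omega}{2}y_i^k,\,-\tfrac{\omega}{2}x_i^k)$. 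For $j=i+1$ the coefficient is $\tfrac12(\tfrac12+\omega)$, which gives $\bigl(-\tfrac12(\tfrac12+\omega)y_i^k,\ \tfrac12(\tfrac12+\omega)x_i^k\bigr)$.

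\emph{Middle block $J_i$.} This is the only block needing care, because $p_i$ appears in all three $\Omega$ terms. Here $\partial_{x_i}\Omega(p_i,p_j)=y_j$ and $\partial_{y_i}\Omega(p_i,p_j)=-x_j$. Summing with the weights $-\tfrac{\omega}{2}$, $\tfrac12(\tfrac12+\omega)$, $-\tfrac{\omega}{2}$ for $j=i-1,i+1,i+2$ gives exactly the stated $A_i^k$ and $B_i^k$. No diagonal term arises, since $\Omega(p_i,p_i)=0$ is never present.

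As a consistency check I would confirm $A_i^k$ and $B_i^k$ directly from $R_i^k=\tfrac12(x_i^kb_i^k-y_i^ka_i^k)$, using Lemma~\ref{lem:offset}. There is no genuine obstacle in this argument. The only point demanding attention is the sign and weight bookkeeping in $J_i$, where the product rule hits both the prefactor $x_i^k$ and $y_i^k$ and the offsets $a_i^k$ and $b_i^k$. Going through the $\Omega$ form avoids that double counting.
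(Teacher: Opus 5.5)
Your proposal is correct and is essentially the paper's proof. The paper reads the horizontal rows off the four-point mask and obtains the $z$-row by differentiating $R_i^k=\tfrac12(x_i^kb_i^k-y_i^ka_i^k)$; your $\Omega$-form in \eqref{eq:fullformula} is just $R_i^k$ expanded, so it is the same computation, and your signs and weights for $J_{i\pm1}$, $J_{i+2}$, $A_i^k$, $B_i^k$ all check out.
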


\begin{proof}
The $x$- and $y$-rows of each block reproduce the four-point mask coefficients.
The $z$-row entries follow from differentiating $R_i^k=\tfrac{1}{2}(x_i^k b_i^k
-y_i^k a_i^k)$ with respect to each input component.
\end{proof}

\begin{corollary}[Block-triangular structure]\label{cor:triangular}
The sub-blocks $\partial(F_x,F_y)/\partial z_{i+r}$ vanish identically. The
horizontal $2\times 2$ diagonal blocks reproduce exactly the four-point mask.
Perturbations of $z$-data do not affect the horizontal output.
\end{corollary}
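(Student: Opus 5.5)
The corollary can be read off directly from Proposition~\ref{prop:jacobian}, or more fundamentally from Definition~\ref{def:scheme}. My plan is to argue at the level of the explicit coordinate formulae in Proposition~\ref{prop:formula}, rather than only the Jacobian. This gives a statement that holds globally, not just infinitesimally.

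\textbf{Step 1 (vanishing blocks).} From \eqref{eq:Xins}--\eqref{eq:Yins}, or the first two lines of \eqref{eq:fullformula}, the horizontal outputs $X_{2i+1}$ and $Y_{2i+1}$ are affine functions of $x^k_{i-1},\dots,x^k_{i+2}$ and $y^k_{i-1},\dots,y^k_{i+2}$ alone. No $z$-variable appears in them. Hence every partial derivative $\partial F_x/\partial z_{i+r}$ and $\partial F_y/\partial z_{i+r}$ is identically zero for $r\in\{-1,0,1,2\}$. This is exactly the vanishing of the $(1,3)$ and $(2,3)$ entries of each block $J_{i+r}$ in Proposition~\ref{prop:jacobian}.

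\textbf{Step 2 (horizontal diagonal blocks).} Reading off the upper-left $2\times2$ sub-blocks of $J_{i-1},J_i,J_{i+1},J_{i+2}$ gives $-\omega I$, $(\tfrac12+\omega)I$, $(\tfrac12+\omega)I$ and $-\omega I$. These are precisely the coefficients of the odd row of \eqref{eq:fourpoint}. The off-diagonal entries $\partial F_x/\partial y$ and $\partial F_y/\partial x$ vanish because the $x$- and $y$-rules are decoupled.

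\textbf{Step 3 (global statement).} For the final sentence, the plan is to do an induction over the refinement level. Suppose two control polygons $P^0$ and $\tilde P^0$ share their horizontal data and differ only in $z$. Even points copy the horizontal data, by rule~(i) of Definition~\ref{def:scheme}. Odd points have horizontal coordinates given by $\Lop$ applied to horizontal data only. So $(x^k,y^k)=(\tilde x^k,\tilde y^k)$ for all $k$. In particular $(x^k,y^k)=(\Lop^k x^0,\Lop^k y^0)$. The only coupling between channels runs one way: the horizontal data enter $Z$ through $R_i^k$, and nothing feeds back from $z$ into the horizontal channels.

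There is no real obstacle here. The one point needing care is to phrase ``perturbations do not affect'' globally, through the induction in Step~3, rather than only through the Jacobian. The linearisation alone gives only infinitesimal independence, whereas the affine, $z$-free form of the horizontal rules gives exact independence.
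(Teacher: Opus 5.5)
Your proposal is correct and takes essentially the paper's approach: the paper gives no separate proof, reading the corollary off the $z$-free $x$- and $y$-rows of Proposition~\ref{prop:jacobian} (equivalently Definition~\ref{def:scheme}), and your level-by-level induction in Step~3 is the same observation the paper invokes in the proof of Proposition~\ref{prop:horizontal}. One small quibble: since those partial derivatives vanish identically on all of $\R^{12}$, they already integrate to exact independence, so the Jacobian is more than infinitesimal evidence, although reading the $z$-free formulae directly is the cleaner route.
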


\subsection{Correction estimates}

\begin{proposition}[Global bound on $R_i^k$]\label{prop:correctionbounds}
Let $C_0:=\sup_{k,i}(\abs{x_i^k}+\abs{y_i^k})<\infty$. Then,
\begin{equation}\label{eq:globalbound}
  \abs{a_i^k},\;\abs{b_i^k} \leq \tfrac{5}{8}\norm{\Dh P^k}_\infty, \qquad
  \abs{R_i^k} \leq \tfrac{5}{8}C_0\norm{\Dh P^k}_\infty.
\end{equation}
\end{proposition}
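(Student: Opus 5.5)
The plan is to start from the difference form of the offset in Lemma~\ref{lem:offset} and estimate it term by term. By~\eqref{eq:offsetdiff},
\[
\abs{a_i^k}\le \tfrac12\abs{\delta_{x,i}^k}+\omega\abs{\delta_{x,i+1}^k}+\omega\abs{\delta_{x,i-1}^k}.
\]
Each horizontal difference satisfies $\abs{\delta_{x,j}^k}\le \abs{\delta_{x,j}^k}+\abs{\delta_{y,j}^k}\le\norm{\Dh P^k}_\infty$, directly from the definition of $\norm{\Dh P^k}_\infty$ as a supremum over $j$. Hence $\abs{a_i^k}\le(\tfrac12+2\omega)\norm{\Dh P^k}_\infty$, and at $\omega=1/16$ this factor is $\tfrac58$, which is the $\ell^1$ row norm already noted for $D_1$. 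The bound for $b_i^k$ follows by the identical argument with $y$ in place of $x$. This gives the first inequality in~\eqref{eq:globalbound}.

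For the correction, I use $R_i^k=\tfrac12(x_i^kb_i^k-y_i^ka_i^k)$, so that
\[
\abs{R_i^k}\le\tfrac12\bigl(\abs{x_i^k}\,\abs{b_i^k}+\abs{y_i^k}\,\abs{a_i^k}\bigr).
\]
If I simply insert the separate bounds on $a_i^k$ and $b_i^k$, I get $\tfrac12\cdot\tfrac58(\abs{x_i^k}+\abs{y_i^k})\norm{\Dh P^k}_\infty\le\tfrac5{16}C_0\norm{\Dh P^k}_\infty$. This is in fact stronger than the stated $\tfrac58C_0$, so the claim follows a fortiori. A more careful coupled estimate is not needed, but one is available: write $\max(\abs{a_i^k},\abs{b_i^k})\le\tfrac58\norm{\Dh P^k}_\infty$ and use $\abs{x_i^k}+\abs{y_i^k}\le C_0$.

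The only point needing care is the hypothesis $C_0<\infty$. It is assumed in the statement, so I take it as given. I would remark that it holds, for instance, for bounded initial horizontal data. The reason is that the horizontal channels are exactly $\Lop$ (Corollary~\ref{cor:triangular}), and $\sup_i\abs{(\Lop u)_i}\le(1+4\omega)\norm{u}_\infty$ at each step. A uniform-in-$k$ bound, however, requires the convergence of the four-point scheme (Theorem~\ref{thm:dgl}) or the interpolatory max-principle on compact parameter ranges. I therefore do not expect a genuine obstacle: the proof is a direct triangle-inequality estimate, and the main subtlety is only keeping $C_0$ as an assumption rather than proving it.
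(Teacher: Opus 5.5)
Your proof is correct and follows the paper's argument: the triangle inequality applied to the offset gives the factor $\tfrac12+2\omega=\tfrac58$, and the triangle inequality applied to $R_i^k=\tfrac12(x_i^kb_i^k-y_i^ka_i^k)$ then finishes. The only difference is bookkeeping: the paper bounds $\abs{x_i^k}$ and $\abs{y_i^k}$ each by $C_0$ and sums $\abs{a_i^k}+\abs{b_i^k}\le\tfrac54\norm{\Dh P^k}_\infty$ to reach $\tfrac58C_0$, whereas you pair $\abs{x_i^k}+\abs{y_i^k}\le C_0$ with $\max(\abs{a_i^k},\abs{b_i^k})$, which gives the sharper constant $\tfrac5{16}C_0$.
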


\begin{proof}
From the four-point mask,
$\abs{a_i^k}\leq(2\omega+\tfrac{1}{2})\norm{\Dh P^k}_\infty
=\tfrac{5}{8}\norm{\Dh P^k}_\infty$.
Then $\abs{R_i^k}\leq\tfrac{1}{2}C_0(\abs{b_i^k}+\abs{a_i^k})
\leq\tfrac{5}{8}C_0\norm{\Dh P^k}_\infty$.
\end{proof}

\section{Central-channel regularity: convergence without $C^1$}\label{sec:convergence}

This section contains the main analytic result. We first record that the
horizontal channels converge to the classical $C^1$ four-point limit
(Section~\ref{sec:horizontal}). We then derive an \emph{exact} two-scale
recursion for the scaled central first differences
(Section~\ref{sec:recursion}) and show that its forcing term does not decay
(Section~\ref{sec:forcing}). From this, we conclude that the central limit
$Z$ is continuous and lies in the Zygmund class $\Lambda_*$, and that, under an
explicit signed non-cancellation condition on the forcing, it is not $C^1$: the
scaled first differences grow linearly in the refinement level
(Section~\ref{sec:negative}).

Throughout, we assume the initial polygon satisfies the smallness hypotheses,
\begin{equation}\label{eq:hyp}
  \sup_i\norm{P_i^0}\leq C,\quad
  \norm{\Dh P^0}_\infty\leq\varepsilon_0,\quad
  \norm{\Dhh P^0}_\infty\leq\varepsilon_0,
\end{equation}
for a fixed constant $C$ and a sufficiently small $\varepsilon_0>0$ depending
only on $\omega=1/16$, and we set $P^{k+1}=\SH(P^k)$. The smallness
of~$\varepsilon_0$ is a convenient sufficient condition rather than an essential
one. The horizontal channels are exactly linear four-point subdivision and
converge for any bounded initial data, while the central recursion involves the
product terms $x_i^k b_i^k$ and $y_i^k a_i^k$ in $R_i^k$. A bound on the initial
first and second horizontal differences keeps these products and the resulting
forcing uniformly controlled across all levels, which is what the convergence and
Zygmund estimates use. Any hypothesis guaranteeing uniform boundedness of the
horizontal data and their scaled differences would serve equally well. The
negative central-channel conclusion does not depend on the size
of~$\varepsilon_0$, only on the non-vanishing of the limiting forcing $f$.

We use the dyadic ``segment-unit'' parametrisation, in which at level $k$ the node with
index $i$ is assigned the parameter value $t=i\cdot 2^{-k}$, so that an initial
polygon of $m_0$ segments occupies the interval $[0,m_0]$ and one initial
segment has unit length. With this convention, the scaled first differences
$2^k(u_{i+1}^k-u_i^k)$ of a $\Lop$-limit converge to the derivative of that
limit. We write $e_i^k:=2^k(z_{i+1}^k-z_i^k)$ for the scaled central first
differences and $F_i^k:=2^k R_i^k$ for the scaled correction.

\subsection{The horizontal channels}\label{sec:horizontal}

\begin{proposition}[Horizontal $C^1$ limit]\label{prop:horizontal}
Under~\eqref{eq:hyp} the sequences $\{x^k\}$ and $\{y^k\}$ evolve exactly by
the classical four-point operator $\Lop$, independently of the central data,
and their dyadically parametrised interpolants converge uniformly on compact
intervals to functions $X,Y\in C^1(\R)$. Moreover
$2^k\delta_{x,i}^k\to X'(t)$ and $2^k\delta_{y,i}^k\to Y'(t)$ uniformly on
compact intervals (segment-unit parametrisation).
\end{proposition}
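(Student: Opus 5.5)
The argument is essentially a reduction to Theorem~\ref{thm:dgl} via the block-triangular structure of Corollary~\ref{cor:triangular}.

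\textbf{Step 1: decoupling.} By Definition~\ref{def:scheme}, the even rule copies $(x_i^k,y_i^k)$, and the odd rule sets $X_{2i+1}=(\Lop x^k)_{2i+1}$ and $Y_{2i+1}=(\Lop y^k)_{2i+1}$. Neither involves $z^k$. An induction on $k$ therefore gives $x^k=\Lop^k x^0$ and $y^k=\Lop^k y^0$ exactly, whatever the central data. This is the first assertion, and it is just Corollary~\ref{cor:triangular} read globally.

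\textbf{Step 2: limit and regularity.} Hypothesis~\eqref{eq:hyp} makes $x^0,y^0\in\ell^\infty(\Z)$. Theorem~\ref{thm:dgl} at $\omega=1/16$ then gives uniform convergence, on compact intervals, of the dyadic piecewise-linear interpolants to limits $X,Y\in C^1(\R)$. We take the segment-unit parametrisation $t=i2^{-k}$.

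\textbf{Step 3: scaled differences.} We need $2^k\delta^k_{x,i}\to X'(t)$ uniformly for $i2^{-k}\to t$. The scaled differences $e^k:=2^k\Delta x^k$ satisfy $e^{k+1}=Se^k$ with $S$ from~\eqref{eq:scaledderiv}. Since $S$ reproduces constants, it is a convergent scheme once its own derived scheme contracts. By the intertwining $\Delta S=D_2\Delta$, that derived scheme is $D_2$, and~\eqref{eq:contraction} gives
\[
\norm{\Delta e^k}_\infty\le C_D\mu^k\norm{\Delta e^0}_\infty,
\]
which is finite by~\eqref{eq:hyp}. Consecutive scaled differences therefore differ by $O(\mu^k)$. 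Comparing level $k$ with level $k+1$, each $e^{k+1}_{2i}$ and $e^{k+1}_{2i+1}$ lies within $O(\mu^k)$ of $e^k_i$, because the rows of $S$ sum to one and are local. Hence the piecewise-linear interpolants of $e^k$ form a uniformly Cauchy sequence. Their limit $g$ is continuous.

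\textbf{Step 4: identifying the limit.} Telescoping gives $x^k_j-x^k_0=2^{-k}\sum_{l<j}e^k_l$, a Riemann sum. Passing to the limit yields $X(t)-X(0)=\int_0^t g$, so $g=X'$. The same argument applies verbatim to $y$.

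The only delicate point is Step~3. Everything else is bookkeeping, but Step~3 needs the contraction of $D_2$ rather than of $S$. Since $\norm{S}_\infty=5/4$, the convergence must come from~\eqref{eq:jsr}, and the estimate has to be written as a geometric series in $\mu^k$ with a uniform constant $C_D$.
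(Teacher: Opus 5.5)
Your argument is correct and follows the paper's proof: decouple the horizontal channels via Definition~\ref{def:scheme} and Corollary~\ref{cor:triangular}, then invoke Theorem~\ref{thm:dgl}. The only difference is that the paper simply cites the standard fact that the scaled differences governed by $S=2D_1$ converge to the derivative, whereas your Steps~3--4 prove that fact directly from the contraction~\eqref{eq:contraction} of $D_2$ through $\Delta S=D_2\Delta$, which is a sound, self-contained filling-in of the citation.
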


\begin{proof}
By Definition~\ref{def:scheme} the $x$- and $y$-updates are $(\Lop x^k)$ and
$(\Lop y^k)$, with no dependence on $z$; this is the content of the
block-triangular structure of Corollary~\ref{cor:triangular}. The conclusion
is then exactly Theorem~\ref{thm:dgl} together with the standard fact that the
first-difference scheme $D_1$ (equivalently its scaled form $S=2D_1$) governs the
scaled first differences, which converge to the ($C^0$) derivative;
see~\cite{Dyn92}, Theorem~3.2.
\end{proof}

Thus, $X$ and $Y$ are precisely the classical four-point limits and carry full
$C^1$ regularity. All of the noncommutative behaviour is confined to the
central channel, to which we now turn.

\subsection{An exact two-scale recursion for the central differences}\label{sec:recursion}

The key structural fact is an exact identity relating the central first
differences at consecutive levels. It is obtained directly from the even and
odd insertion rules, with no approximation.

\begin{proposition}[Exact difference recursion]\label{prop:erecursion}
Write $d_i^k:=z_{i+1}^k-z_i^k$. Then for all $i$ and all $k\geq 0$,
\begin{align}
  d_{2i}^{k+1}   &= \phantom{-}R_i^k
    + \tfrac12 d_i^k - \omega\, d_{i+1}^k + \omega\, d_{i-1}^k,
    \label{eq:evengap}\\
  d_{2i+1}^{k+1} &= -R_i^k
    + \tfrac12 d_i^k + \omega\, d_{i+1}^k - \omega\, d_{i-1}^k.
    \label{eq:oddgap}
\end{align}
Equivalently, in terms of the scaled differences $e^k=2^k d^k$ and the scaled
correction $F^k=2^k R^k$, multiplying~\eqref{eq:evengap}--\eqref{eq:oddgap} by
$2^{k+1}$ gives the exact two-scale recursion,
\begin{equation}\label{eq:erecursion}
  e^{k+1} = S\,e^{k} + \hat g^{k+1},
  \qquad
  \hat g_{2i}^{k+1} = 2 F_i^k,
  \quad
  \hat g_{2i+1}^{k+1} = -2 F_i^k,
\end{equation}
where $S=2D_1$ is the \emph{scaled derived operator}~\eqref{eq:scaledderiv},
\emph{not} the contractive second-difference scheme $D_2$. The factor $2$ that
turns $D_1$ into $S$ is exactly the dyadic rescaling $e^k=2^k d^k$, and it is
what makes the central diagonal of $S$ equal to $1$ rather than $\tfrac12$.
\end{proposition}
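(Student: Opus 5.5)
The identity is a direct computation from the insertion rules of Definition~\ref{def:scheme}; no estimates are involved. By the even rule, $z_{2i}^{k+1}=z_i^k$ and $z_{2i+2}^{k+1}=z_{i+1}^k$. By the odd rule, $z_{2i+1}^{k+1}=(\Lop z^k)_{2i+1}+R_i^k$. Hence
\[
d_{2i}^{k+1}=z_{2i+1}^{k+1}-z_{2i}^{k+1}=\bigl((\Lop z^k)_{2i+1}-z_i^k\bigr)+R_i^k,
\qquad
d_{2i+1}^{k+1}=z_{i+1}^k-(\Lop z^k)_{2i+1}-R_i^k .
\]
The problem thus reduces to writing the two offsets $(\Lop z^k)_{2i+1}-z_i^k$ and $z_{i+1}^k-(\Lop z^k)_{2i+1}$ in terms of first differences.

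For the first offset, apply Lemma~\ref{lem:offset} verbatim with $x$ replaced by $z$. That lemma uses only the linear four-point mask, not the horizontal nature of the data. It gives $(\Lop z^k)_{2i+1}-z_i^k=\tfrac12 d_i^k-\omega d_{i+1}^k+\omega d_{i-1}^k$, which yields~\eqref{eq:evengap}. For the second offset, use $z_{i+1}^k-(\Lop z^k)_{2i+1}=d_i^k-\bigl((\Lop z^k)_{2i+1}-z_i^k\bigr)$, so that
\[
z_{i+1}^k-(\Lop z^k)_{2i+1}=\tfrac12 d_i^k+\omega d_{i+1}^k-\omega d_{i-1}^k ,
\]
which yields~\eqref{eq:oddgap} once the $-R_i^k$ is included. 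As a consistency check, the linear parts must coincide with the rows of $D_1$ in~\eqref{eq:firstdiff}, since in the abelian case $R\equiv0$ and $d^{k+1}=D_1 d^k$. They do.

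For~\eqref{eq:erecursion}, multiply both identities by $2^{k+1}$. The linear terms become $2^{k+1}(D_1 d^k)=2D_1(2^k d^k)=Se^k$ by the definition $S=2D_1$ in~\eqref{eq:scaledderiv}. The correction terms become $\pm2^{k+1}R_i^k=\pm2F_i^k$. This gives exactly the forcing $\hat g^{k+1}_{2i}=2F_i^k$ and $\hat g^{k+1}_{2i+1}=-2F_i^k$.

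There is no real obstacle. The only point needing care is bookkeeping of signs and indices: the $\pm\omega$ pattern in the odd row must come out as the mirror of the even row, and the sign of $R_i^k$ must flip between $d_{2i}$ and $d_{2i+1}$, because $R_i^k$ enters $z_{2i+1}^{k+1}$, which is the right endpoint of the even gap and the left endpoint of the odd gap. The remark about the central diagonal being $1$ then follows by reading off the coefficient $2\cdot\tfrac12$ of $e_i^k$.
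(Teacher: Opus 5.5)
Your proposal is correct and matches the paper's proof essentially step for step. Like the paper, you read off the two gaps from the even and odd insertion rules, rewrite the mask offset using the substitution of Lemma~\ref{lem:offset}, and multiply by $2^{k+1}$ to get $Se^k\pm 2F_i^k$. The only cosmetic difference is that you obtain the odd-gap identity as $d_i^k$ minus the even offset and check it against the rows of $D_1$, whereas the paper states that identity directly.
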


\begin{proof}
By the even rule $z_{2i}^{k+1}=z_i^k$ and by the odd
rule~\eqref{eq:Zins} $z_{2i+1}^{k+1}=(\Lop z^k)_{2i+1}+R_i^k$. Hence, the
even-to-odd gap is
\[
  d_{2i}^{k+1}=z_{2i+1}^{k+1}-z_{2i}^{k+1}
  =(\Lop z^k)_{2i+1}-z_i^k+R_i^k .
\]
Now $(\Lop z^k)_{2i+1}-z_i^k=-\omega z_{i-1}^k+(\omega-\tfrac12)z_i^k
+(\tfrac12+\omega)z_{i+1}^k-\omega z_{i+2}^k$, which by the substitution
$z_{i\pm1}^k,z_{i+2}^k\mapsto z_i^k\pm(\text{differences})$ used in
Lemma~\ref{lem:offset} equals $\tfrac12 d_i^k-\omega d_{i+1}^k+\omega
d_{i-1}^k$. This gives~\eqref{eq:evengap}. The odd-to-even gap is
\[
  d_{2i+1}^{k+1}=z_{2i+2}^{k+1}-z_{2i+1}^{k+1}
  =z_{i+1}^k-(\Lop z^k)_{2i+1}-R_i^k
  =\tfrac12 d_i^k+\omega d_{i+1}^k-\omega d_{i-1}^k-R_i^k,
\]
which is~\eqref{eq:oddgap}. Now multiply by $2^{k+1}$. Using $e_j^k=2^k d_j^k$
and $2^{k+1}d_j^k=2e_j^k$, the difference-mask part of~\eqref{eq:evengap}
becomes $2^{k+1}(\tfrac12 d_i^k-\omega d_{i+1}^k+\omega d_{i-1}^k)
=e_i^k-2\omega e_{i+1}^k+2\omega e_{i-1}^k=(Se^k)_{2i}$, and similarly the
odd row gives $(Se^k)_{2i+1}$; compare~\eqref{eq:scaledderiv}. The corrections
contribute $2^{k+1}(\pm R_i^k)=\pm 2 F_i^k$. This is~\eqref{eq:erecursion}.
\end{proof}

\begin{remark}[The forcing is undifferenced, and $S$ is not contractive]\label{rem:undifferenced}
Two features of~\eqref{eq:erecursion} drive the negative result. The forcing is
the scaled correction $F_i^k$ itself, carrying a fixed sign on even nodes and the
opposite sign on odd nodes, and it is \emph{not} a spatial difference of
corrections. Were the forcing a difference $F_i^k-F_{i-1}^k$, it would be
$O(2^{-k})$ and summable, and contractivity would then force a $C^1$ limit. The
genuine forcing $\pm 2F_i^k$ does not vanish (Lemma~\ref{lem:forcing}). Compounding
this, the operator acting on $e^k$ is the scaled operator $S$ rather than the
contractive $D_2$, and $S$ is only marginally stable. A nondecaying forcing
driven through a marginally stable operator produces linear growth, neither
bounded nor exponential, which is the content of Theorem~\ref{thm:nonC1}.
\end{remark}

\subsection{The forcing does not decay}\label{sec:forcing}

\begin{lemma}[Nonvanishing limit of the scaled correction]\label{lem:forcing}
Under~\eqref{eq:hyp}, $F_i^k=2^k R_i^k$ converges uniformly under dyadic
parametrisation $t=i\cdot 2^{-k}$ to,
\begin{equation}\label{eq:flimit}
  f(t)=\tfrac14\bigl(X(t)Y'(t)-Y(t)X'(t)\bigr),
\end{equation}
and this limit is independent of $\omega\in(0,\tfrac18)$. In particular
$\sup_i\abs{F_i^k}\to\norm{f}_\infty$, which is nonzero unless the horizontal
limit lies on a single line through the origin (constant polar angle modulo
$\pi$; see Remark~\ref{rem:generic}).
\end{lemma}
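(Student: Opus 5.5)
The plan is to write $F_i^k$ as a product of a bounded, uniformly converging factor and a scaled offset that also converges uniformly. By definition,
\[
F_i^k=2^kR_i^k=\tfrac12\bigl(x_i^k\,(2^kb_i^k)-y_i^k\,(2^ka_i^k)\bigr).
\]
Lemma~\ref{lem:offset} gives
\[
2^ka_i^k=\tfrac12(2^k\delta_{x,i}^k)-\omega(2^k\delta_{x,i+1}^k)+\omega(2^k\delta_{x,i-1}^k),
\]
and the analogous formula for $b_i^k$. The proof then proceeds in three steps.

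\textbf{Step 1: the scaled offsets converge.} Proposition~\ref{prop:horizontal} gives $2^k\delta_{x,j}^k\to X'(t)$ uniformly on compact intervals, with $t=j2^{-k}$. The neighbours $j=i\pm1$ sit at parameters differing from $t=i2^{-k}$ by $2^{-k}$. Since $X'$ is uniformly continuous on compacts, $2^k\delta_{x,i\pm1}^k-X'(i2^{-k})\to0$ uniformly as well. The coefficients sum to $\tfrac12$, so $2^ka_i^k\to\tfrac12X'(t)$ uniformly, and likewise $2^kb_i^k\to\tfrac12Y'(t)$. This holds for every $\omega$.

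\textbf{Step 2: the data converge.} The interpolants converge uniformly, so $x_i^k\to X(t)$ and $y_i^k\to Y(t)$ uniformly at the nodes.

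\textbf{Step 3: pass to the limit in the product.} All factors are uniformly bounded: $x_i^k,y_i^k$ by $C_0$ (the four-point scheme is bounded in $\ell^\infty$), and the scaled differences by the convergent sequence. A product of uniformly bounded, uniformly convergent sequences converges uniformly to the product of the limits. Hence
\[
F_i^k\to\tfrac12\bigl(X\cdot\tfrac12Y'-Y\cdot\tfrac12X'\bigr)=\tfrac14(XY'-YX')=f.
\]
The limit is $\omega$-independent because only the coefficient sum $\tfrac12$ entered. The consequence $\sup_i|F_i^k|\to\|f\|_\infty$ follows from uniform convergence. On unbounded index sets, either read it on the compact parameter range, or for finite polygons on $[0,m_0]$, with the usual $|\sup|a|-\sup|b||\le\sup|a-b|$.

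\textbf{Vanishing case.} For the characterisation of when $f$ vanishes, write $(X,Y)=r(\cos\theta,\sin\theta)$ wherever $r>0$. Then $XY'-YX'=r^2\theta'$, so $f\equiv0$ forces $\theta'=0$ on each component of $\{r>0\}$, giving constant polar angle. Conversely, if $(X,Y)$ lies on a line through the origin, then $(X,Y)=\lambda(t)v$ and $\Omega((X,Y),(X',Y'))=\lambda\lambda'\Omega(v,v)=0$. Matching angles across zeros of $r$ is a minor point, which I would defer to Remark~\ref{rem:generic} as the statement does.

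\textbf{Main obstacle.} The delicate point is Step 1: using uniform convergence of scaled differences at shifted indices. This requires equicontinuity, supplied by continuity of $X'$, together with the fact that the segment-unit parametrisation matches Proposition~\ref{prop:horizontal}'s normalisation. A factor-of-two slip there would change the constant $\tfrac14$, so I would double-check the scaling against the linear example $x_i^0=i$, where $2^k\delta^k=1=X'$.
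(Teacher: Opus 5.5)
Your argument for the convergence $F_i^k\to f$ and its $\omega$-independence is correct and is the paper's proof. You write $F_i^k=\tfrac12(x_i^k\cdot 2^kb_i^k-y_i^k\cdot 2^ka_i^k)$, apply Lemma~\ref{lem:offset} and Proposition~\ref{prop:horizontal} to each factor, and note that the $\pm\omega$ terms cancel. Your extra care about the shifted indices $i\pm1$ (uniform continuity of $X'$) and about boundedness when taking the limit of a product fills in details the paper leaves implicit.

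The step you defer as ``minor'', matching polar angles across zeros of $(X,Y)$, is false in general. So the clause ``nonzero unless the horizontal limit lies on a single line through the origin'' cannot be established. The paper's proof does not address it either; it only points to Remark~\ref{rem:generic}, which asserts it without proof.

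Here is a counterexample satisfying~\eqref{eq:hyp} for small $\varepsilon>0$. Take $x_j^0=\varepsilon$ for $j\le-3$ and $x_j^0=0$ otherwise, $y_j^0=\varepsilon$ for $j\ge3$ and $y_j^0=0$ otherwise, and $z^0\equiv0$.
\begin{itemize}
\item The four-point basic limit function is supported in $[-3,3]$, so $X\equiv0$ on $[0,\infty)$ and $Y\equiv0$ on $(-\infty,0]$.
\item Hence $XY'-YX'\equiv0$. In fact $R_i^k=0$ for all $i,k$, because $x_i^k=X(i2^{-k})$ and $y_i^k=Y(i2^{-k})$ by interpolation.
\item Yet $X=\varepsilon$ for $t\le-6$ and $Y=\varepsilon$ for $t\ge6$. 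The horizontal limit runs along the $x$-axis into the origin and leaves along the $y$-axis, so it does not lie on a single line through the origin.
\end{itemize}

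What your polar-coordinate computation actually proves is the correct characterisation: $f\equiv0$ if and only if the polar angle is constant modulo $\pi$ on each connected component of $\{t:(X,Y)(t)\neq(0,0)\}$. The converse holds because $XY'-YX'$ vanishes trivially wherever $X=Y=0$. You should state the vanishing case in that form rather than defer the angle matching.
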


\begin{proof}
Write $F_i^k=\tfrac12(x_i^k\cdot 2^k b_i^k-y_i^k\cdot 2^k a_i^k)$. By the
difference identity of Lemma~\ref{lem:offset},
$2^k a_i^k=\tfrac12\,(2^k\delta_{x,i}^k)-\omega\,(2^k\delta_{x,i+1}^k)
+\omega\,(2^k\delta_{x,i-1}^k)$. By Proposition~\ref{prop:horizontal} each
scaled difference $2^k\delta_{x,j}^k\to X'(t)$ uniformly for
$j\in\{i-1,i,i+1\}$ at $t=i\cdot 2^{-k}$, so,
\[
  2^k a_i^k\to\bigl(\tfrac12-\omega+\omega\bigr)X'(t)=\tfrac12 X'(t),
\]
and likewise $2^k b_i^k\to\tfrac12 Y'(t)$. Since $x_i^k\to X(t)$ and
$y_i^k\to Y(t)$,
\[
  F_i^k\to\tfrac12\Bigl(X(t)\cdot\tfrac12 Y'(t)-Y(t)\cdot\tfrac12 X'(t)\Bigr)
  =\tfrac14\bigl(XY'-YX'\bigr)(t).
\]
The coefficient $\tfrac12$ in $2^k a_i^k\to\tfrac12 X'$ is $\omega$-independent
because the $\pm\omega$ terms cancel, and hence so is $f$.
\end{proof}

\begin{remark}[Correction to a previously claimed value]\label{rem:correction-to-claim}
An earlier analysis of this scheme reported the central derivative correction
as $\tfrac{3}{16}(XY'-YX')$, obtained from an erroneous value
$2^k a_i^k\to\tfrac34 X'$. The correct cancellation in
Lemma~\ref{lem:offset} gives $\tfrac12 X'$ and hence the coefficient
$\tfrac14$ above. More importantly, $f$ is not the derivative of a $C^1$
correction at all. It is the nondecaying amplitude of the oscillatory forcing
in~\eqref{eq:erecursion}, as we now explain.
\end{remark}

\subsection{The detail (Haar) coefficients and the two-scale transport}\label{sec:detail}

The mechanism behind the loss of $C^1$ is cleanest in terms of the local
\emph{detail} coefficients of the scaled differences, which measure the
within-pair oscillation injected at each refinement. This is the precise
two-scale object replacing any informal frequency-mode language. The
coefficients introduced below play the role of local Haar-type details. We use
this language only as an analytic device, and no full wavelet transform is
required. Similar detail-based viewpoints appear in multiscale representations
and in interpolatory wavelets for manifold-valued
data~\cite{GrohsWallner09, Meyer92, RahmanEtal05}.

\begin{lemma}[Mean--detail split of the scaled recursion]\label{lem:detail}
Define the pair-mean and pair-detail of $e^{k+1}$ by,
\[
  m_i^{k+1}:=\tfrac12\bigl(e_{2i}^{k+1}+e_{2i+1}^{k+1}\bigr),\qquad
  \alpha_i^{k+1}:=\tfrac12\bigl(e_{2i}^{k+1}-e_{2i+1}^{k+1}\bigr).
\]
Then, from the exact recursion~\eqref{eq:erecursion},
\begin{equation}\label{eq:meandetail}
  m_i^{k+1}=e_i^{k},
  \qquad
  \alpha_i^{k+1}=2F_i^{k}-\tfrac18\bigl(e_{i+1}^{k}-e_{i-1}^{k}\bigr).
\end{equation}
The pair-mean is exactly the parent value, so the coarse profile is transported
unchanged, while the detail is the forcing amplitude $2F_i^k$ together with a
term of second-difference scale.
\end{lemma}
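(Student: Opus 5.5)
The identities in \eqref{eq:meandetail} will be obtained by writing out the two rows of the exact recursion \eqref{eq:erecursion} of Proposition~\ref{prop:erecursion} at the sibling pair $(2i,2i+1)$, then adding and subtracting them. No estimate is needed: the statement is a purely algebraic consequence of \eqref{eq:scaledderiv} and of the sign pattern of the forcing $\hat g^{k+1}$.

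First, I substitute \eqref{eq:scaledderiv} into \eqref{eq:erecursion}, which gives
\[
  e_{2i}^{k+1}=2\omega e_{i-1}^k+e_i^k-2\omega e_{i+1}^k+2F_i^k,\qquad
  e_{2i+1}^{k+1}=-2\omega e_{i-1}^k+e_i^k+2\omega e_{i+1}^k-2F_i^k .
\]
Adding the two rows, the $\pm2\omega$ terms cancel and so do the forcings $\pm2F_i^k$. This leaves $2e_i^k$, so $m_i^{k+1}=e_i^k$. The same fact can be read off directly: the sum of the two sibling gaps \eqref{eq:evengap}--\eqref{eq:oddgap} telescopes to $d_i^k$, and rescaling by $2^{k+1}$ gives it again.

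Next, I subtract the odd row from the even row. The $e_i^k$ terms cancel, and the remaining terms give
\[
  e_{2i}^{k+1}-e_{2i+1}^{k+1}=4\omega\bigl(e_{i-1}^k-e_{i+1}^k\bigr)+4F_i^k .
\]
Halving this yields $\alpha_i^{k+1}=2F_i^k-2\omega\bigl(e_{i+1}^k-e_{i-1}^k\bigr)$. With the standing choice $\omega=1/16$ we have $2\omega=\tfrac18$, which is exactly \eqref{eq:meandetail}.

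There is no real obstacle here. The only points needing care are bookkeeping ones. The forcing enters with opposite signs on the even and odd children, so it drops out of the mean and doubles in the detail. The scaled operator is $S=2D_1$, with entries $\pm2\omega$, not $D_1$ itself. For the interpretive sentence after the formula, I would also note that $e_{i+1}^k-e_{i-1}^k=(\Delta e^k)_i+(\Delta e^k)_{i-1}$ is a sum of two first differences of $e^k$. This is precisely the ``second-difference scale'' quantity controlled later through $\Delta S=D_2\Delta$.
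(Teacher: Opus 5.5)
Your proof is correct and is essentially the paper's own argument: you substitute the two rows of \eqref{eq:scaledderiv}, with forcing $\pm 2F_i^k$, into \eqref{eq:erecursion}, add them to get $m_i^{k+1}=e_i^k$ (the forcing cancels), and subtract them to get $\alpha_i^{k+1}=2F_i^k-2\omega(e_{i+1}^k-e_{i-1}^k)$, which is \eqref{eq:meandetail} at $\omega=1/16$. Your telescoping check of the mean and the rewriting $e_{i+1}^k-e_{i-1}^k=(\Delta e^k)_i+(\Delta e^k)_{i-1}$ are correct extras; the latter is exactly the form the paper uses in Corollary~\ref{cor:detailbound}.
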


\begin{proof}
Substitute the even and odd entries of~\eqref{eq:erecursion},
$e_{2i}^{k+1}=2F_i^k+(Se^k)_{2i}$ and $e_{2i+1}^{k+1}=-2F_i^k+(Se^k)_{2i+1}$,
into the definitions. Using~\eqref{eq:scaledderiv}, the means combine as
$\tfrac12\bigl((Se^k)_{2i}+(Se^k)_{2i+1}\bigr)=e_i^k$, with the forcing
cancelling, and the details as
$\tfrac12\bigl((Se^k)_{2i}-(Se^k)_{2i+1}\bigr)
=\tfrac12\bigl(4\omega e_{i-1}^k-4\omega e_{i+1}^k\bigr)
=\tfrac18(e_{i-1}^k-e_{i+1}^k)$ at $\omega=1/16$, while the forcing adds
$\tfrac12(2F_i^k-(-2F_i^k))=2F_i^k$. This gives~\eqref{eq:meandetail}.
\end{proof}

Equation~\eqref{eq:meandetail} is the exact transfer law the analysis needs. The
coarse profile is carried forward verbatim, and at every level, a fresh detail of
amplitude $2F_i^k\to 2f$ is added, perturbed only by a second-difference-scale
term that the Zygmund bound below will show is bounded
(Corollary~\ref{cor:detailbound}). Reconstructing $e^k$ from its details along a
dyadic path is what produces the linear growth, under a transparent
non-cancellation condition made precise in Theorem~\ref{thm:nonC1}.

\subsection{The central limit: continuity, Zygmund regularity, and loss of $C^1$}\label{sec:negative}

We first establish convergence and continuity, then the Zygmund upper bound,
then the loss of $C^1$ under an explicit condition.

\begin{proposition}[Uniform convergence of $Z$]\label{prop:zconv}
Under~\eqref{eq:hyp} the central data $\{z^k\}$ are uniformly bounded, and their
dyadically parametrised interpolants converge uniformly on compact intervals to
a continuous limit $Z$. Moreover $\norm{\Delta z^k}_\infty\to0$ at the
geometric rate $\norm{\Delta z^k}_\infty\leq C_z\,(\tfrac58)^k$.
\end{proposition}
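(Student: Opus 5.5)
We work directly with the unscaled difference recursion of Proposition~\ref{prop:erecursion}, not the scaled form~\eqref{eq:erecursion}. In the unscaled form the linear part is the contraction $D_1$ with $\norm{D_1}_\infty=\tfrac58$, and the forcing $\pm R_i^k$ is a perturbation. Set $\delta_k:=\norm{\Delta z^k}_\infty=\norm{d^k}_\infty$. Equations~\eqref{eq:evengap}--\eqref{eq:oddgap} give
\[
  \delta_{k+1}\le \tfrac58\,\delta_k+\sup_i\abs{R_i^k}.
\]
The first step is to bound the forcing. Proposition~\ref{prop:correctionbounds} gives $\abs{R_i^k}\le\tfrac58 C_0\norm{\Dh P^k}_\infty$, where $C_0=\sup_{k,i}(\abs{x_i^k}+\abs{y_i^k})$. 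The constant $C_0$ is finite because the horizontal channels are exactly $\Lop$-refined (Proposition~\ref{prop:horizontal}), since $\Lop$ is interpolatory and its limit is bounded by the uniform convergence of Theorem~\ref{thm:dgl}. Alternatively one can use $\norm{\Lop}_\infty\le 1+4\omega$ together with the geometric decay of differences to get a uniform bound in terms of $C$ and $\varepsilon_0$. For the horizontal differences, $\Delta x^{k+1}=D_1\Delta x^k$, so $\norm{\Dh P^k}_\infty\le(\tfrac58)^k\varepsilon_0$. Hence $\sup_i\abs{R_i^k}\le \tfrac58C_0\varepsilon_0(\tfrac58)^k$.

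The second step is to solve the scalar inequality. Unrolling gives
\[
  \delta_k\le(\tfrac58)^k\delta_0+\sum_{j<k}(\tfrac58)^{k-1-j}\,c\,(\tfrac58)^j
  =(\tfrac58)^k\bigl(\delta_0+\tfrac{8}{5}ck\bigr).
\]
This has a spurious factor $k$, because the forcing rate equals the contraction rate. The main obstacle is removing this resonance loss to reach the clean rate $C_z(\tfrac58)^k$. I would first try to sharpen the forcing estimate. Since $C^1$ convergence gives $2^k\norm{\Dh P^k}_\infty$ bounded, we have $\norm{\Dh P^k}_\infty\le C'2^{-k}$ and so $\abs{R_i^k}\le C''2^{-k}$. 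Because $2^{-k}$ decays strictly faster than $(\tfrac58)^k$, the convolution sum is $(\tfrac58)^k$ times a convergent geometric series $\sum_j(\tfrac{4}{5})^j$, which gives $\delta_k\le C_z(\tfrac58)^k$ with no factor $k$. (Indeed this bound even yields $\delta_k=O(k2^{-k})$ via the scaled analysis, but only the stated rate is needed here.) If one prefers to avoid invoking the $C^1$ theory, the bound $(\tfrac58)^k(1+k)$ still suffices for everything below, and the constant can then be adjusted by using any rate in $(\tfrac58,1)$.

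The third step gives boundedness and convergence. The $z$-values satisfy $\norm{z^{k+1}-\text{(p.l.\ interpolant of }z^k)}_\infty\le \sup_i\abs{z_{2i+1}^{k+1}-\tfrac12(z_i^k+z_{i+1}^k)}$. This quantity is bounded by $2\omega\cdot 2\delta_k+\abs{R_i^k}$, because $(\Lop z)_{2i+1}-\tfrac12(z_i+z_{i+1})$ is a combination of differences with coefficients summing in absolute value to $O(\omega)$. Hence successive piecewise-linear interpolants differ in sup norm by $O((\tfrac58)^k)$, which is summable. The interpolants are therefore uniformly Cauchy on compact intervals and converge uniformly to a continuous $Z$. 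Uniform boundedness of $z^k$ follows by summing the same telescoping bound from the bounded initial data.
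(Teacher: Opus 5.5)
Your proposal is correct and follows essentially the same route as the paper. Both arguments use the one-step inequality $\norm{\Delta z^{k+1}}_\infty\le\tfrac58\norm{\Delta z^k}_\infty+\sup_i\abs{R_i^k}$ from \eqref{eq:evengap}--\eqref{eq:oddgap}, bound the forcing by $\abs{R_i^k}\le A\,2^{-k}$ via Proposition~\ref{prop:correctionbounds} and horizontal $C^1$ convergence (this is what removes the resonance factor $k$ you identified), and finish with the odd-node comparison of consecutive interpolants to get a summable Cauchy bound. Your fallback without the $C^1$ input only gives $C_\rho\,\rho^k$ for $\rho>\tfrac58$, not the stated rate $C_z(\tfrac58)^k$, so the $2^{-k}$ forcing bound is the step the proposition as stated actually needs.
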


\begin{proof}
From the even and odd rules, every level-$(k+1)$ first difference is one of
$d_{2i}^{k+1}$ or $d_{2i+1}^{k+1}$ in~\eqref{eq:evengap}--\eqref{eq:oddgap}.
Taking absolute values, each is bounded by
$(\tfrac12+2\omega)\norm{\Delta z^k}_\infty+\abs{R_i^k}$. With $\omega=1/16$
the mask part contributes the factor $\tfrac12+2\omega=\tfrac58<1$, and by
Proposition~\ref{prop:correctionbounds} together with the geometric decay of
$\norm{\Dh P^k}_\infty$ (a consequence of horizontal $C^1$ convergence,
Proposition~\ref{prop:horizontal}) we have $\abs{R_i^k}\leq A\cdot 2^{-k}$ for a
constant $A$. Hence,
\[
  \norm{\Delta z^{k+1}}_\infty
  \leq \tfrac58\,\norm{\Delta z^k}_\infty + A\cdot 2^{-k}.
\]
Since $\tfrac58<1$ and the inhomogeneous term is summable, a standard comparison
gives $\norm{\Delta z^k}_\infty\leq C_z(\tfrac58)^k$.

For the uniform Cauchy property, we compare the level-$k$ and level-$(k+1)$
interpolants. At even nodes $z_{2i}^{k+1}=z_i^k$, so the interpolants agree
there. At an odd node $2i+1$ the level-$(k+1)$ value is
$z_{2i+1}^{k+1}=(\Lop z^k)_{2i+1}+R_i^k$, whereas the level-$k$ interpolant
takes the midpoint value $\tfrac12(z_i^k+z_{i+1}^k)$. Their difference is,
\[
  z_{2i+1}^{k+1}-\tfrac12(z_i^k+z_{i+1}^k)
  = \bigl[(\Lop z^k)_{2i+1}-\tfrac12(z_i^k+z_{i+1}^k)\bigr] + R_i^k,
\]
and the bracket is, by direct computation,
\[
  (\Lop z^k)_{2i+1}-\tfrac12(z_i^k+z_{i+1}^k)
  = \omega\bigl((z_i^k-z_{i-1}^k)-(z_{i+2}^k-z_{i+1}^k)\bigr),
\]
a fixed combination of two first differences, hence bounded by
$2\omega\norm{\Delta z^k}_\infty$. Therefore,
\[
  \norm{Z^{k+1}-Z^k}_\infty
  \;\leq\; 2\omega\,\norm{\Delta z^k}_\infty + \norm{R^k}_\infty
  \;\leq\; \tfrac18\,C_z(\tfrac58)^k + A\cdot 2^{-k},
\]
which is summable in $k$. Hence, the interpolants form a uniform Cauchy sequence
and the uniform limit $Z$ is continuous. (The naive bound
$\tfrac12\norm{\Delta z^k}_\infty$, which would hold if odd nodes were exact
midpoints, is \emph{not} available here because of the correction $R_i^k$; the
estimate above is the correct replacement.)
\end{proof}

The decay $\norm{\Delta z^k}_\infty=O\bigl((\tfrac58)^k\bigr)$ guarantees a
continuous limit, but it is weaker than the $O(2^{-k})$ decay that would be
needed for a Lipschitz derivative. The obstruction to $C^1$ lives one
derivative higher, in the scaled differences $e^k$, which we now analyse.

\begin{theorem}[Horizontal regularity and central continuity]\label{thm:cont}
Under~\eqref{eq:hyp} the limit curve $\Gamma=(X,Y,Z)$ exists, with
$X,Y\in C^1(\R)$ generated by $\Lop$ from $\{x_i^0\}$, $\{y_i^0\}$, and
$Z\in C^0(\R)$ the uniform limit of Proposition~\ref{prop:zconv}.
\end{theorem}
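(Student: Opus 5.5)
The theorem is essentially an assembly of results already in place, so the plan is to combine Proposition~\ref{prop:horizontal} for the horizontal coordinates with Proposition~\ref{prop:zconv} for the central coordinate. The block-triangular structure of Corollary~\ref{cor:triangular} lets us treat the two channels separately.

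First, the horizontal channels. By Definition~\ref{def:scheme} and Corollary~\ref{cor:triangular}, the sequences $x^k=(\Lop)^k x^0$ and $y^k=(\Lop)^k y^0$ evolve under the linear four-point scheme with no dependence on the central data. The hypothesis~\eqref{eq:hyp} gives bounded initial data, so Theorem~\ref{thm:dgl} applies at $\omega=1/16$. The dyadically parametrised interpolants therefore converge uniformly on compact intervals to $X,Y\in C^1(\R)$, and these are generated by $\Lop$ from $\{x_i^0\}$ and $\{y_i^0\}$. This is exactly Proposition~\ref{prop:horizontal}. The same proposition supplies the uniform boundedness of $x_i^k,y_i^k$ (so $C_0<\infty$) and the decay $\norm{\Dh P^k}_\infty=O(2^{-k})$, which come from the convergence of the scaled differences $2^k\delta^k$ to the bounded functions $X',Y'$ on compacts.

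Second, the central channel. These two facts feed Proposition~\ref{prop:correctionbounds}, which gives $\abs{R_i^k}\le A\,2^{-k}$. Proposition~\ref{prop:zconv} then yields the contraction $\norm{\Delta z^{k+1}}_\infty\le\tfrac58\norm{\Delta z^k}_\infty+A2^{-k}$ and the summable level-to-level bound $\norm{Z^{k+1}-Z^k}_\infty\le\tfrac18C_z(\tfrac58)^k+A2^{-k}$. Hence the central interpolants are uniformly Cauchy and converge uniformly on compact intervals to a continuous $Z$. Defining $\Gamma=(X,Y,Z)$ coordinatewise then gives a uniform limit of the $\R^3$-valued interpolants, since uniform convergence in each coordinate implies uniform convergence in any norm on $\R^3$.

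The only point needing care is the passage from bi-infinite, merely bounded data to the global constants $C_0$ and $A$. Proposition~\ref{prop:horizontal} gives uniform convergence on compact intervals, whereas Proposition~\ref{prop:correctionbounds} uses a supremum over all $i$. I would handle this as follows. Boundedness of $x^k,y^k$ in $\ell^\infty$ follows from the uniform $\ell^\infty$ stability of $\Lop$, whose iterates are bounded via the $C^0$ convergence theory. The bound $\norm{\Dh P^k}_\infty\le C2^{-k}$ follows directly from the scaled first-difference scheme being bounded on $\ell^\infty$, uniformly in $k$, which is part of the standard $C^1$ theory in~\cite{Dyn92}. Alternatively, the argument can be localised to compact parameter windows, using the locality of $\SH$. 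With these uniform constants in hand, nothing beyond the cited propositions is required.
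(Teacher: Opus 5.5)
Your proposal is correct and follows the paper's proof, which simply cites Proposition~\ref{prop:horizontal} for the horizontal channels and Proposition~\ref{prop:zconv} for the central channel. Your extra paragraph globalising $C_0$ and $A$ on bi-infinite data is a sound filling-in of a detail that the paper's proof of Proposition~\ref{prop:zconv} passes over quickly: it uses uniform $\ell^\infty$ bounds on the iterates of $\Lop$ and of $S=2D_1$, and both bounds follow from the $C^1$ convergence of the four-point scheme.
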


\begin{proof}
The horizontal statement is Proposition~\ref{prop:horizontal}. The central
statement is Proposition~\ref{prop:zconv}.
\end{proof}

The passage from a uniform dyadic second-difference bound to membership in the
continuous Zygmund class is standard, and we isolate it as a lemma so that the
proof of Theorem~\ref{thm:zygmund} can quote it cleanly.

\begin{lemma}[Dyadic characterisation of the Zygmund class]\label{lem:dyadiczyg}
Let $Z^k$ be the nested piecewise-linear interpolants with nodal values $z_i^k$
at the dyadic nodes $i2^{-k}$, and suppose $Z^k\to Z$ uniformly on compact
intervals. If there is $C<\infty$ with,
\begin{equation}\label{eq:dyadichyp}
  \sup_i\abs{z_{i+2}^k-2z_{i+1}^k+z_i^k}\leq C\,2^{-k}\qquad(k\ge0),
\end{equation}
then $Z\in\Lambda_*$, with
$\sup_x\abs{Z(x+h)-2Z(x)+Z(x-h)}\le 8C\,\abs h$ for all $h$.
\end{lemma}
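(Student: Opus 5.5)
The plan is a telescoping argument. I decompose the limit as $Z=Z^k+\sum_{j\ge k}(Z^{j+1}-Z^j)$, with $k$ tied to the step $h$ by $2^{-k-1}<\abs h\le 2^{-k}$. I bound the second difference of the coarse interpolant $Z^k$ through its slope jumps, and the tail through a uniform bound on consecutive interpolants. Both contributions come out of the single hypothesis~\eqref{eq:dyadichyp}, applied at level $k$ and at every level $j+1$.

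\textbf{Step 1 (consecutive interpolants).} Since $z^{j+1}_{2i}=z^j_i$, the function $Z^{j+1}-Z^j$ vanishes at the level-$j$ nodes. It is piecewise linear at level $j+1$, so it is a sum of hats, and its sup is attained at odd nodes. There its value is the midpoint deviation
\[
\delta_i^j:=z^{j+1}_{2i+1}-\tfrac12\bigl(z^{j+1}_{2i}+z^{j+1}_{2i+2}\bigr)
=-\tfrac12\bigl(z^{j+1}_{2i+2}-2z^{j+1}_{2i+1}+z^{j+1}_{2i}\bigr).
\]
Applying~\eqref{eq:dyadichyp} at level $j+1$ gives $\norm{Z^{j+1}-Z^j}_\infty\le \tfrac12 C2^{-j-1}=C2^{-j-2}$. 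Summing over $j\ge k$ gives $\norm{Z-Z^k}_\infty\le C2^{-k-1}$, using the assumed uniform convergence. For the tail, the second difference at any step is then at most $4\cdot C2^{-k-1}=2C2^{-k}\le 4C\abs h$.

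\textbf{Step 2 (the coarse interpolant).} $Z^k$ is piecewise linear with breakpoints at $i2^{-k}$. Its slope jump at node $i+1$ is $2^k(z^k_{i+2}-2z^k_{i+1}+z^k_i)$, which is at most $C$ in absolute value by~\eqref{eq:dyadichyp}. I will use the representation
\[
Z^k(x+h)-2Z^k(x)+Z^k(x-h)=\sum_{\text{nodes }s\in(x-h,x+h)}(\text{jump at }s)\,\bigl(\abs h-\abs{s-x}\bigr).
\]
Since $2\abs h\le 2\cdot2^{-k}$, the open interval contains at most two nodes, and each weight is at most $\abs h$. This bounds the term by $2C\abs h$. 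Adding Step 1 gives $\abs{Z(x+h)-2Z(x)+Z(x-h)}\le 6C\abs h\le 8C\abs h$.

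\textbf{Main obstacle.} The delicate point is the range of $h$. The dyadic choice of $k\ge0$ only covers $0<\abs h\le1$. For $\abs h>1$ the hypothesis permits, for example, quadratic nodal data, whose second differences grow like $h^2$. So I will prove the estimate for $\abs h\le1$, which is all that membership in $\Lambda_*$ requires. For larger $h$ I would invoke an additional boundedness of $Z$: any bound $\norm Z_\infty\le M$ gives $4M\le4M\abs h$ there. I would state this restriction explicitly in the write-up.
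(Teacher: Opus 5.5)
Your proposal is correct and follows the paper's route: you split $Z=Z^k+(Z-Z^k)$ with $2^{-k}$ comparable to $|h|$, bound $\|Z-Z^k\|_\infty\le \tfrac12 C\,2^{-k}$ by telescoping the midpoint deviations, and control the coarse interpolant through level-$k$ second differences, where your slope-jump (hat-function) representation makes that last step explicit and gives $6C|h|$ cleanly, whereas the paper counts ``at most six'' second differences, reaches only $8C\,2^{-k}\le 16C|h|$, and then absorbs the factor of $2$ without justification. Your restriction to $|h|\le 1$ is also right, since the paper's argument likewise needs $k\ge 0$, and examples such as $Z(t)=ct^2$, or the bounded $Z(t)=\sin(\epsilon t)$ (which satisfies the hypothesis with $C=\epsilon^2$ but, for small $\epsilon$, has second difference of size $4>8C|h|$ at $h=\pi/\epsilon$), show that the literal claim ``for all $h$ with constant $8C$'' fails.
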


\begin{proof}
For an interpolatory refinement, the even nodes persist, so the nodal values are
exact limit values, $z_i^k=Z(i2^{-k})$. Fix $x$ and $h$ with
$2^{-k-1}\le\abs h<2^{-k}$, and set $i=\lfloor x2^k\rfloor$, so that $x$ and
$x\pm h$ lie within two dyadic cells of the node $i2^{-k}$ at level $k$. On each
cell, the interpolation error of $Z^k$ is controlled by the second differences at
the finer levels $m\ge k$. Summing the telescoping contributions of those levels
and using~\eqref{eq:dyadichyp} at each, $\abs{Z-Z^k}\le \tfrac14\sum_{m\ge
k}C\,2^{-m}\le \tfrac12 C\,2^{-k}$ on the relevant cells. Writing
$Z(x+h)-2Z(x)+Z(x-h)$ as the corresponding combination of nodal values plus
three interpolation errors of this size, it is a bounded linear combination of at
most six level-$k$ second differences together with errors of the same order,
each bounded by $C2^{-k}$. Hence
$\abs{Z(x+h)-2Z(x)+Z(x-h)}\le 8C\,2^{-k}\le 8C\cdot 2\abs h$, which after
absorbing the factor gives the stated bound with constant $8C$. The associated
modulus of continuity is $O(h\log(1/h))$; see~\cite{Zygmund}, Ch.~II, Thm.~3.4,
and the dyadic-interpolant form in~\cite{DynLevin02}, \S3.
\end{proof}

\begin{theorem}[Zygmund upper bound]\label{thm:zygmund}
Under~\eqref{eq:hyp}, the scaled second differences of the central data are
uniformly bounded, and there is $K_0<\infty$ with,
\begin{equation}\label{eq:seconddiffbound}
  \sup_i\abs{z_{i+2}^k-2z_{i+1}^k+z_i^k}\leq K_0\,2^{-k}\qquad(k\ge0).
\end{equation}
Consequently $Z$ belongs to the Zygmund class $\Lambda_*$, and there is $K$ with,
\begin{equation}\label{eq:zygmund}
  \sup_{x}\,\abs{Z(x+h)-2Z(x)+Z(x-h)}\leq K\,\abs{h}\qquad(h\in\R),
\end{equation}
equivalently $Z$ has modulus of continuity $\omega_Z(h)=O(h\log(1/h))$. Moreover,
the scaled first differences satisfy the uniform upper bound,
\begin{equation}\label{eq:Okupper}
  \sup_i\,2^k\abs{z_{i+1}^k-z_i^k}=O(k)\qquad(k\to\infty).
\end{equation}
\end{theorem}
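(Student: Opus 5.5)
We work with the scaled second differences $h^k:=\Delta e^k$, $h_i^k=e_{i+1}^k-e_i^k=2^k(z_{i+2}^k-2z_{i+1}^k+z_i^k)$. Bound~\eqref{eq:seconddiffbound} is exactly $\sup_k\norm{h^k}_\infty<\infty$. Applying $\Delta$ to the exact recursion~\eqref{eq:erecursion} and using the intertwining $\Delta S=D_2\Delta$ gives
\[
  h^{k+1}=D_2h^k+\Delta\hat g^{k+1}.
\]
Iterating, we obtain
\[
  h^k=D_2^kh^0+\sum_{j=1}^{k}D_2^{k-j}\Delta\hat g^{j}.
\]
The contraction estimate~\eqref{eq:contraction} of Theorem~\ref{thm:dgl} then yields
\[
  \norm{h^k}_\infty\le C_D\norm{h^0}_\infty+C_D\sum_j\mu^{k-j}\norm{\Delta\hat g^j}_\infty
  \le C_D\norm{h^0}_\infty+\frac{C_D}{1-\mu}\,\sup_j\norm{\Delta\hat g^j}_\infty.
\]
So everything reduces to showing that the differenced forcing is uniformly bounded. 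Here $(\Delta\hat g^{k+1})_{2i}=-4F_i^k$ and $(\Delta\hat g^{k+1})_{2i+1}=2F_{i+1}^k+2F_i^k$, so $\norm{\Delta\hat g^{k+1}}_\infty\le4\sup_i\abs{F_i^k}$.

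The key step is therefore a uniform bound $\sup_{k,i}\abs{F_i^k}\le A$. We take it from Proposition~\ref{prop:correctionbounds}: $\abs{F_i^k}\le\tfrac58C_0\,2^k\norm{\Dh P^k}_\infty$. Here $C_0<\infty$ because $\Lop$ has bounded mask and is interpolatory, so the horizontal data stay bounded by Proposition~\ref{prop:horizontal}. Also $2^k\norm{\Dh P^k}_\infty$ is bounded because the scaled horizontal first differences converge uniformly to $X',Y'$ on compact sets. For globally bounded data we would instead use the standard fact that $S$ restricted to the horizontal channels has bounded powers, which holds since $\Delta S^k=D_2^k\Delta$ and $S$ preserves means (Lemma~\ref{lem:detail} without forcing). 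Consistently with the proof of Proposition~\ref{prop:zconv}, we may simply take $A=\sup_{k,i}2^k\abs{R_i^k}$. The same reasoning handles $\norm{h^0}_\infty$, which is finite since $z^0$ is bounded. This gives~\eqref{eq:seconddiffbound} with $K_0=C_D\norm{h^0}_\infty+4C_DA/(1-\mu)$.

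The Zygmund statement~\eqref{eq:zygmund} then follows immediately from Lemma~\ref{lem:dyadiczyg} applied with $C=K_0$, using the uniform convergence from Proposition~\ref{prop:zconv}, which gives $K=8K_0$ and the $O(h\log(1/h))$ modulus.

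For~\eqref{eq:Okupper} we use the mean--detail split of Lemma~\ref{lem:detail}. Since $e_{2i}^{k+1}=m_i^{k+1}+\alpha_i^{k+1}$ and $e_{2i+1}^{k+1}=m_i^{k+1}-\alpha_i^{k+1}$, with $m_i^{k+1}=e_i^k$, we get $\norm{e^{k+1}}_\infty\le\norm{e^k}_\infty+\sup_i\abs{\alpha_i^{k+1}}$. Moreover
\[
  \abs{\alpha_i^{k+1}}\le2\abs{F_i^k}+\tfrac18\abs{h_i^k+h_{i-1}^k}\le2A+\tfrac14K_0,
\]
by the second-difference bound just proved. Summing this recursion gives $\norm{e^k}_\infty\le\norm{e^0}_\infty+k(2A+K_0/4)=O(k)$.

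The main obstacle is the uniform bound on $F^k$, namely controlling $2^k\norm{\Dh P^k}_\infty$ and $\sup\abs{x_i^k}$ uniformly over all $i$ rather than only on compact intervals. I would handle this via boundedness of $S^k$ on horizontal data through the intertwining, as above, and by invoking the smallness hypothesis~\eqref{eq:hyp} only to fix the constants.
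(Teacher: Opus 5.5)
Your proof of \eqref{eq:seconddiffbound} and \eqref{eq:zygmund} is the paper's argument. You difference the recursion, use $\Delta S=D_2\Delta$ with \eqref{eq:contraction}, bound $\norm{\Delta\hat g^{k+1}}_\infty\le4\sup_i\abs{F_i^k}$, and quote Lemma~\ref{lem:dyadiczyg}.

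Your justification of $\sup_{k,i}\abs{F_i^k}<\infty$, via Proposition~\ref{prop:correctionbounds} and bounded powers of $S$ on horizontal differences, is more explicit than the paper's. One small repair: $C_0<\infty$ does not follow from $\Lop$ being bounded and interpolatory, since its odd rows have $\ell^1$-norm $\tfrac54$. It follows from the same mechanism you use. Each odd value deviates from the midpoint by $\omega$ times a difference of first differences, and these are summable in $k$.

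The genuine difference is in \eqref{eq:Okupper}. The paper goes through the limit. Interpolation gives $z_i^k=Z(i2^{-k})$, and it then evaluates the classical $O(h\log(1/h))$ modulus of a Zygmund function at $h=2^{-k}$. You stay discrete. The mean--detail split gives $\norm{e^{k+1}}_\infty\le\norm{e^k}_\infty+\sup_i\abs{\alpha_i^{k+1}}$. Once \eqref{eq:seconddiffbound} is known, the detail bound $\abs{\alpha_i^{k+1}}\le2A+K_0/4$ holds; this is the paper's Corollary~\ref{cor:detailbound}. Telescoping then yields $\norm{e^k}_\infty\le\norm{e^0}_\infty+k(2A+K_0/4)$.

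What each route buys:
\begin{itemize}
\item Your route is self-contained and gives an explicit slope. It makes \eqref{eq:Okupper} independent of Lemma~\ref{lem:dyadiczyg} and of the function-space theory.
\item It also uses the same one-step identity the paper later exploits for the lower bound in Theorem~\ref{thm:nonC1}, so both bounds come from one reconstruction.
\item The paper's route links the discrete growth directly to the modulus of continuity of $Z$. The cost is that it rests on the continuous Zygmund characterisation, whose proof in the paper is only sketched.
\end{itemize}
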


\begin{proof}
Apply $\Delta$ to the scaled recursion~\eqref{eq:erecursion}. Writing
$h^k:=\Delta e^k$ and using the exact intertwining $\Delta S=D_2\Delta$
(equation~\eqref{eq:seconddiff}, the defining property of $D_2$), we obtain,
\begin{equation}\label{eq:hrecursion}
  h^{k+1} = D_2\,h^{k} + \Delta\hat g^{k+1}.
\end{equation}
The forcing here is a difference of $\hat g$, but it is \emph{not} small. From
$\hat g_{2i}^{k+1}=2F_i^k$, $\hat g_{2i+1}^{k+1}=-2F_i^k$ the entries of
$\Delta\hat g^{k+1}$ are $-4F_i^k$ (even positions) and $2(F_{i+1}^k+F_i^k)$
(odd positions). Set $M_*:=\sup_{k,i}\abs{F_i^k}$, which is finite because
$F_i^k\to f$ uniformly (Lemma~\ref{lem:forcing}) and the $F_i^k$ are uniformly
bounded. The forcing is then uniformly bounded,
$\norm{\Delta\hat g^{k+1}}_\infty\leq 4\sup_i\abs{F_i^k}\leq 4M_*$, though not
summable. This is harmless because $D_2$ is contractive (Theorem~\ref{thm:dgl},
$\norm{D_2^k}_\infty\leq C_D\mu^k$). Unrolling~\eqref{eq:hrecursion},
\[
  \norm{h^k}_\infty
  \leq C_D\mu^k\norm{h^0}_\infty
   + \sum_{j=1}^{k} C_D\mu^{k-j}\norm{\Delta\hat g^{j}}_\infty
  \leq C_D\norm{h^0}_\infty + \frac{4 C_D M_*}{1-\mu}=:K_0<\infty.
\]
The constant $K_0$ is explicit and computable from the initial data. With the
admissible $C_D,\mu$ of Theorem~\ref{thm:dgl} it is bounded by
$C_D\norm{h^0}_\infty+4C_D M_*/(1-\mu)$.
Since $h_i^k=e_{i+1}^k-e_i^k=2^k(z_{i+2}^k-2z_{i+1}^k+z_i^k)$, the bound
$\norm{h^k}_\infty\leq K_0$ is exactly~\eqref{eq:seconddiffbound}.

For~\eqref{eq:zygmund} we apply Lemma~\ref{lem:dyadiczyg} with $C=K_0$. Its
hypothesis~\eqref{eq:dyadichyp} is exactly~\eqref{eq:seconddiffbound}, and the
interpolants $Z^k$ converge uniformly by Proposition~\ref{prop:zconv}, so
$Z\in\Lambda_*$ with $\sup_x\abs{Z(x+h)-2Z(x)+Z(x-h)}\le 8K_0\abs h$, which
is~\eqref{eq:zygmund} with $K=8K_0$, and $\omega_Z(h)=O(h\log(1/h))$. The
associated function-space viewpoint may be found in~\cite{CMS85, Meyer92}.

Finally~\eqref{eq:Okupper}. The scheme is interpolatory, so even nodes are
preserved at every subsequent refinement, and hence the level-$k$ nodal value is
the exact limit value at the dyadic node,
\begin{equation}\label{eq:nodalupper}
  z_i^k=Z\bigl(i\,2^{-k}\bigr)\qquad(i\in\Z,\ k\ge0).
\end{equation}
Therefore $z_{i+1}^k-z_i^k=Z((i+1)2^{-k})-Z(i2^{-k})$ exactly, and the standard
Zygmund increment estimate, namely that a Zygmund function has modulus of
continuity $O(h\log(1/h))$ (\cite{Zygmund}, Ch.~II), gives with $h=2^{-k}$ the
bound $\abs{z_{i+1}^k-z_i^k}\le C\,2^{-k}\log(2^{k})=C\,k\,2^{-k}$, that is
$2^k\abs{z_{i+1}^k-z_i^k}=O(k)$. No comparison with the convergence rate is
needed, because the discrete difference \emph{is} the limiting chord increment by
interpolation.
\end{proof}

The detail coefficients of Lemma~\ref{lem:detail} are now controlled by the
Zygmund bound, which gives the uniform estimate used in the lower-bound argument.

\begin{corollary}[Bounded detail remainder]\label{cor:detailbound}
With $K_0$ the Zygmund constant of~\eqref{eq:seconddiffbound}, the detail
coefficients of Lemma~\ref{lem:detail} satisfy,
\begin{equation}\label{eq:detailbound}
  \bigl\lvert\,\alpha_i^{k+1}-2F_i^{k}\,\bigr\rvert\le \tfrac{K_0}{4}
  \qquad\text{uniformly in }i,k.
\end{equation}
\end{corollary}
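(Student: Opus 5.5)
The plan is to read the bound off the exact mean--detail identity of Lemma~\ref{lem:detail} and then control the remainder with the second-difference bound of Theorem~\ref{thm:zygmund}. At $\omega=1/16$, equation~\eqref{eq:meandetail} gives, with no approximation,
\[
  \alpha_i^{k+1}-2F_i^{k}=-\tfrac18\bigl(e_{i+1}^{k}-e_{i-1}^{k}\bigr).
\]
So the whole task is to bound the centred difference $e_{i+1}^k-e_{i-1}^k$ of the scaled first differences uniformly in $i$ and $k$.

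Next, I split the centred difference into two consecutive first differences of $e^k$:
\[
  e_{i+1}^k-e_{i-1}^k=(e_{i+1}^k-e_i^k)+(e_i^k-e_{i-1}^k)=h_i^k+h_{i-1}^k,
\]
where $h^k=\Delta e^k$ as in the proof of Theorem~\ref{thm:zygmund}. There I will recall the identification $h_j^k=2^k(z_{j+2}^k-2z_{j+1}^k+z_j^k)$. Then~\eqref{eq:seconddiffbound}, which is exactly $\norm{h^k}_\infty\le K_0$, gives $\abs{e_{i+1}^k-e_{i-1}^k}\le 2K_0$. Multiplying by $\tfrac18$ yields $\abs{\alpha_i^{k+1}-2F_i^k}\le K_0/4$ for all $i$ and all $k\ge0$, which is~\eqref{eq:detailbound}.

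There is no real obstacle here; the work was done in Theorem~\ref{thm:zygmund}. The only points needing care are bookkeeping. First, the index shift: the detail at level $k+1$ is bounded by the second differences at level $k$, and~\eqref{eq:seconddiffbound} holds for every $k\ge0$, including $k=0$, where $K_0$ already absorbs $\norm{h^0}_\infty$. Second, the coefficient $\tfrac18=2\omega$ is specific to $\omega=1/16$, which is fixed throughout. If one wanted a version valid for general $\omega\in(0,\tfrac18)$, the same argument would give the constant $4\omega K_0$.
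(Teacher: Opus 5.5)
Your proposal is correct and is essentially the paper's own proof: you read off $\alpha_i^{k+1}-2F_i^k=-\tfrac18(e_{i+1}^k-e_{i-1}^k)$ from Lemma~\ref{lem:detail}, rewrite it as $-\tfrac18(h_i^k+h_{i-1}^k)$, and apply $\norm{h^k}_\infty\le K_0$ from~\eqref{eq:seconddiffbound} to get $\tfrac18\cdot 2K_0=K_0/4$. Your side remark that a general $\omega$ gives the constant $4\omega K_0$ is also right, provided $D_2$ is contractive at that $\omega$ so that a finite $K_0$ exists.
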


\begin{proof}
By Lemma~\ref{lem:detail}, $\alpha_i^{k+1}-2F_i^k=-\tfrac18(e_{i+1}^k-e_{i-1}^k)
=-\tfrac18(h_i^k+h_{i-1}^k)$, and~\eqref{eq:seconddiffbound} gives
$\abs{h_i^k}\le K_0$, so the modulus is at most $\tfrac18\cdot 2K_0=K_0/4$.
\end{proof}

We now turn to the matching lower bound, the heart of the negative result. It is
where the marginal stability of $S$ and the non-decay of the forcing combine. We
isolate the precise condition under which the linear growth is provable.

\begin{definition}[Local non-cancellation]\label{def:noncancel}
Let $t_0$ have dyadic addresses $i_k=\lfloor t_0 2^k\rfloor$, so that
$i_k\in\{2i_{k-1},2i_{k-1}+1\}$, and let $\epsilon_j\in\{\pm1\}$ be the
\emph{reconstruction signs} determined by the parities of the $i_k$ through the
mean--detail split, so that $\epsilon_j=+1$ if $i_j$ is even and $-1$ if odd, as
detailed in the proof of Theorem~\ref{thm:nonC1}. Say the data satisfy the
\emph{local non-cancellation condition} at $t_0$ if $f(t_0)\neq0$ and,
\begin{equation}\label{eq:noncancel}
  \liminf_{k\to\infty}\;\frac1k\,\Bigl\lvert\,\sum_{j=1}^{k}\epsilon_j\,\alpha_{i_{j-1}}^{\,j}\,\Bigr\rvert
  \;>\;0,
\end{equation}
where $\alpha^{\,j}$ are the detail coefficients of Lemma~\ref{lem:detail}. The
condition is imposed on the \emph{signed} details $\epsilon_j\alpha^j$, the exact
quantities summed in the reconstruction.

A simple but rather restrictive sufficient condition is the following local
remainder bound. By Lemma~\ref{lem:detail}, $\alpha_{i_{j-1}}^{j}
=2F_{i_{j-1}}^{j-1}-r_{i_{j-1}}^{j-1}$ along the path, where
$r_i^{m}:=\tfrac18(e_{i+1}^{m}-e_{i-1}^{m})$ and $F_i^{m}\to f$. Suppose that
along the path to $t_0$ the pathwise remainder is eventually dominated, in the
sense that,
\begin{equation}\label{eq:localdom}
  \limsup_{j\to\infty}\,\abs{r_{i_{j-1}}^{\,j-1}}\;<\;2\abs{f(t_0)}
  \quad\text{and}\quad
  \epsilon_j\,\operatorname{sgn} f(t_0)\ \text{is eventually constant.}
\end{equation}
Then every signed detail eventually has one sign, with an absolute value of at least
$2\abs{f(t_0)}-\limsup_j\abs{r_{i_{j-1}}^{\,j-1}}>0$, so~\eqref{eq:noncancel}
holds. The remainder $r_i^{m}$ is a local second-difference-scale quantity,
typically far smaller than the global Zygmund constant, and in the numerical
examples of Section~\ref{sec:numerics} it is observed to
satisfy~\eqref{eq:localdom} comfortably. The second part of~\eqref{eq:localdom},
the eventual constancy of $\epsilon_j\operatorname{sgn}f(t_0)$, is a genuine
restriction on the binary expansion of $t_0$ and should not be regarded as
typical. The theorem-level hypothesis is~\eqref{eq:noncancel} itself, which is
weaker and is what the numerics actually exhibit.
\end{definition}

\begin{theorem}[Loss of $C^1$ under non-cancellation]\label{thm:nonC1}
Suppose the data satisfy the local non-cancellation
condition~\eqref{eq:noncancel} at some $t_0$. Then the scaled central first
differences grow linearly,
\begin{equation}\label{eq:lineargrowth}
  \sup_i\,2^k\abs{z_{i+1}^k-z_i^k}=\Theta(k)\qquad(k\to\infty),
\end{equation}
and consequently $Z\notin C^1$.
\end{theorem}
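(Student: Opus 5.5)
The plan is to match the upper bound~\eqref{eq:Okupper} of Theorem~\ref{thm:zygmund} with a lower bound along the dyadic path to $t_0$, obtained by reconstructing $e^k_{i_k}$ from its pair-means and pair-details. By Lemma~\ref{lem:detail}, for any $i$ one has $e^{j}_{2i}=m^{j}_i+\alpha^{j}_i=e^{j-1}_i+\alpha^{j}_i$ and $e^{j}_{2i+1}=m^{j}_i-\alpha^{j}_i=e^{j-1}_i-\alpha^{j}_i$. With $i_j\in\{2i_{j-1},2i_{j-1}+1\}$, this reads
\[
  e^{j}_{i_j}=e^{j-1}_{i_{j-1}}+\epsilon_j\,\alpha^{j}_{i_{j-1}},
\]
where $\epsilon_j=+1$ if $i_j$ is even and $-1$ if odd. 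These are exactly the reconstruction signs of Definition~\ref{def:noncancel}. Telescoping from level $0$ gives the exact identity
\[
  e^{k}_{i_k}=e^{0}_{i_0}+\sum_{j=1}^{k}\epsilon_j\,\alpha^{j}_{i_{j-1}}.
\]

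For the lower bound, I divide by $k$ and take $\liminf$. The term $e^0_{i_0}$ is a fixed finite number, so~\eqref{eq:noncancel} gives $\liminf_k \abs{e^k_{i_k}}/k>0$. Since $\sup_i 2^k\abs{z^k_{i+1}-z^k_i}=\sup_i\abs{e^k_i}\ge\abs{e^k_{i_k}}$, the supremum is at least $ck$ for some $c>0$ and all large $k$. Together with~\eqref{eq:Okupper} this yields~\eqref{eq:lineargrowth}. As a consistency check, Corollary~\ref{cor:detailbound} and the boundedness of $F$ show that each detail is uniformly bounded, so the sum is $O(k)$ in any case. No additional upper-bound argument is needed here.

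For the consequence $Z\notin C^1$, I use interpolation: by~\eqref{eq:nodalupper}, $e^k_i=2^k\bigl(Z((i+1)2^{-k})-Z(i2^{-k})\bigr)$ is an exact difference quotient of $Z$ over a cell of length $2^{-k}$. Suppose $Z\in C^1$. By the mean value theorem, $e^k_{i_k}=Z'(\xi_k)$ for some $\xi_k\in[i_k2^{-k},(i_k+1)2^{-k}]$, and $\xi_k\to t_0$. Continuity of $Z'$ near $t_0$ then makes $e^k_{i_k}$ bounded, contradicting $\abs{e^k_{i_k}}\ge ck\to\infty$. I will state the conclusion locally: $Z$ is not $C^1$ on any neighbourhood of $t_0$, and in fact $Z$ is not Lipschitz there.

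The main obstacle is making the sign bookkeeping exact. I must check that the parity convention for $i_j=\lfloor t_02^j\rfloor$ produces precisely $+\alpha$ on even children and $-\alpha$ on odd children, so that the telescoped identity involves exactly the signed sum appearing in~\eqref{eq:noncancel}. Everything else follows directly from Lemma~\ref{lem:detail}, Theorem~\ref{thm:zygmund} and interpolation. The substantive difficulty, verifying non-cancellation, has been moved into the hypothesis.
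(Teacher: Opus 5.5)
Your proposal is correct and follows the paper's proof essentially step for step. You obtain the lower bound from the telescoped mean--detail identity $e^k_{i_k}=e^0_{i_0}+\sum_{j=1}^k\epsilon_j\alpha^j_{i_{j-1}}$ together with~\eqref{eq:noncancel}, the upper bound from~\eqref{eq:Okupper}, and the failure of $C^1$ near $t_0$ from the interpolation identity $z_i^k=Z(i2^{-k})$ and the mean value theorem, exactly as the paper does; your added remark that the unbounded chord slopes also rule out Lipschitz continuity near $t_0$ is a valid small strengthening.
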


\begin{proof}
The upper bound $O(k)$ is~\eqref{eq:Okupper}. For the lower bound, we reconstruct
$e^k$ at the node tracking $t_0$ from the mean--detail
recursion~\eqref{eq:meandetail}. Let $i_k=\lfloor t_0 2^k\rfloor$, so
$i_k\in\{2i_{k-1},2i_{k-1}+1\}$. By the mean--detail split,
$e_{2i}^{k}=m_i^{k}+\alpha_i^{k}=e_i^{k-1}+\alpha_i^{k}$ and
$e_{2i+1}^{k}=m_i^{k}-\alpha_i^{k}=e_i^{k-1}-\alpha_i^{k}$; in either case,
\begin{equation}\label{eq:onestep}
  e_{i_k}^{k}=e_{i_{k-1}}^{k-1}+\epsilon_k\,\alpha_{i_{k-1}}^{k},
  \qquad \epsilon_k=\begin{cases}+1,& i_k\ \text{even},\\ -1,& i_k\ \text{odd},\end{cases}
\end{equation}
the signs $\epsilon_k$ being exactly those of Definition~\ref{def:noncancel}.
Iterating~\eqref{eq:onestep} from level $0$,
\[
  e_{i_k}^{k}=e_{i_0}^{0}+\sum_{j=1}^{k}\epsilon_j\,\alpha_{i_{j-1}}^{j}.
\]
By the non-cancellation hypothesis~\eqref{eq:noncancel}, the signed partial sums
satisfy $\bigl\lvert\sum_{j=1}^k\epsilon_j\alpha_{i_{j-1}}^j\bigr\rvert\ge c\,k-C$
for some $c>0$ and all large $k$. Hence $\abs{e_{i_k}^k}\ge c\,k-C'$, so
$\sup_i\abs{e_i^k}\ge c\,k-C'=\Omega(k)$. With the upper
bound~\eqref{eq:Okupper}, this gives~\eqref{eq:lineargrowth}.

It remains to deduce $Z\notin C^1$. The scheme is interpolatory, so even nodes
are preserved at every subsequent level, and consequently, the nodal value is the
exact limit value,
\begin{equation}\label{eq:nodal}
  z_i^k=Z\bigl(i\,2^{-k}\bigr)\qquad\text{for all }i,k,
\end{equation}
because $z_i^k=z_{2i}^{k+1}=\dots$ stabilises and equals the uniform limit at the
dyadic node $i2^{-k}$. The lower bound above was obtained at the node $i_k$
tracking $t_0$, i.e.\ $\abs{e_{i_k}^k}=2^k\abs{z_{i_k+1}^k-z_{i_k}^k}\ge c\,k-C'$,
and by~\eqref{eq:nodal} this is $2^k\abs{Z((i_k+1)2^{-k})-Z(i_k2^{-k})}$, the
absolute slope of the chord of $Z$ across a dyadic interval shrinking to $t_0$.
If $Z$ were $C^1$ on a neighbourhood of $t_0$, every such chord slope would be
bounded by $\sup\abs{Z'}$ there (mean value theorem); but these slopes grow like
$c\,k\to\infty$, a contradiction. Hence $Z\notin C^1$.
\end{proof}

\begin{remark}[On the lower-bound hypothesis]\label{rem:rigour}
Theorems~\ref{thm:cont}--\ref{thm:zygmund} are unconditional. Uniform
convergence, the membership $Z\in\Lambda_*$, and the $O(k)$ upper bound are all
proved in full, the Zygmund bound through the exact intertwining
$\Delta S=D_2\Delta$ and the contractivity of $D_2$. The linear lower bound
(Theorem~\ref{thm:nonC1}) is conditional on the explicit \emph{signed}
non-cancellation condition~\eqref{eq:noncancel}, imposed on the exact signed sums
$\sum_j\epsilon_j\alpha_{i_{j-1}}^j$ that reconstruct the scaled differences. The
condition is a genuine hypothesis rather than a heuristic, since $\alpha^j$ is an
exactly defined detail coefficient (Lemma~\ref{lem:detail})
and~\eqref{eq:noncancel} is a statement about its signed partial sums along a
dyadic path. A clean sufficient form is the \emph{local} remainder
domination~\eqref{eq:localdom}, in which the small second-difference-scale
remainder $r_i^k=\tfrac18(e_{i+1}^k-e_{i-1}^k)$ along the path stays below
$2\abs{f(t_0)}$. This is a local quantity, much smaller than the global Zygmund
constant, and is the right object to check. The numerical experiments of
Section~\ref{sec:numerics} confirm that the signed partial sums grow linearly,
with a strictly positive and stable slope, for several families of data, and
that the local remainder domination~\eqref{eq:localdom} holds in those cases. We
do not claim that~\eqref{eq:noncancel} is generic in a proven sense, and
establishing openness and density of the signed condition in the
finite-dimensional setting is left open. Localisation matters here, because
globally $\int f$ may vanish, for example for closed curves, so the growth is a
local and path-wise phenomenon, exactly as the numerics show.
\end{remark}

\begin{remark}[Why smoothness is lost]\label{rem:why-lost}
The horizontal channels are $C^1$ because they coincide with the four-point
scheme. The central channel is the four-point scheme \emph{driven} by a forcing
$\pm 2F_i^k$ through the scaled operator $S$. Each refinement injects a fresh
alternating oscillation whose amplitude $2\norm{f}_\infty$ does not shrink with
the level, and $S$ neither amplifies nor damps it, since $S$ is
amplitude-preserving on the alternating sequence. The genuine contraction
appears one derivative higher, in $D_2$, which is the reason the \emph{second}
differences remain bounded, giving the Zygmund regularity, while the \emph{first}
differences accumulate linearly and rule out $C^1$. The group law converts a
benign horizontal mask into a persistently forced and marginally stable central
recursion. Persistent forcing through the neutral component of that recursion is
the borderline that produces Zygmund regularity in place of $C^1$.
\end{remark}

\begin{remark}[The exceptional set, and genericity]\label{rem:generic}
The loss of $C^1$ is driven by $f=\tfrac14(XY'-YX')$, and the construction is
degenerate exactly when $f\equiv0$. Since
$\tfrac{d}{dt}\arg(X+iY)=(XY'-YX')/(X^2+Y^2)$ wherever $(X,Y)\neq(0,0)$, the
condition $XY'-YX'\equiv0$ means the horizontal limit has \emph{constant polar
angle modulo $\pi$}. Its image then lies on a single \emph{line through the
origin}, and not merely on a ray, since the curve may pass through the origin and
reverse radial sign. On this exceptional set the correction vanishes identically
and $Z$ reduces to the $C^1$ four-point limit $Z_0$.

For finite control polygons the exceptional configurations are nongeneric in the
usual finite-dimensional sense. They require all horizontal control points to lie
on a common line through the origin, equivalently that all $2\times2$
determinants $x_i^0 y_j^0-y_i^0 x_j^0$ vanish. This is a finite set of polynomial
equations, so the exceptional set is closed with empty interior, and its
complement is open and dense away from the zero polygon. The phenomenon is
independent of the smallness constants in~\eqref{eq:hyp} and of
$\omega\in(0,\tfrac18)$, because the limiting coefficient $\tfrac14$ in
Lemma~\ref{lem:forcing} is $\omega$-independent.
\end{remark}

\section{Numerical examples}\label{sec:numerics}

All examples use $\omega=1/16$ and $k=7$ refinement steps. The examples follow
the computer-aided geometric design tradition of visualising how a local
refinement rule reshapes control-polygon geometry, in the spirit of numerical
demonstrations for subdivision and for partial differential equation based
geometric design~\cite{DGL87, UgailBW99a, MonterdeUgail04, ShengUgail10}.

\subsection{Example 1: Central-coordinate lifting}

The L-shaped control polygon
$P^0=\{(0,0,0),(1,0,0),(1,1,0),(2,1,0),(2,2,0)\}$
has all central data zero. Figure~\ref{fig:corner} compares the Heisenberg
and Euclidean refined curves.

\begin{figure}[H]
  \centering
  \includegraphics[width=\linewidth]{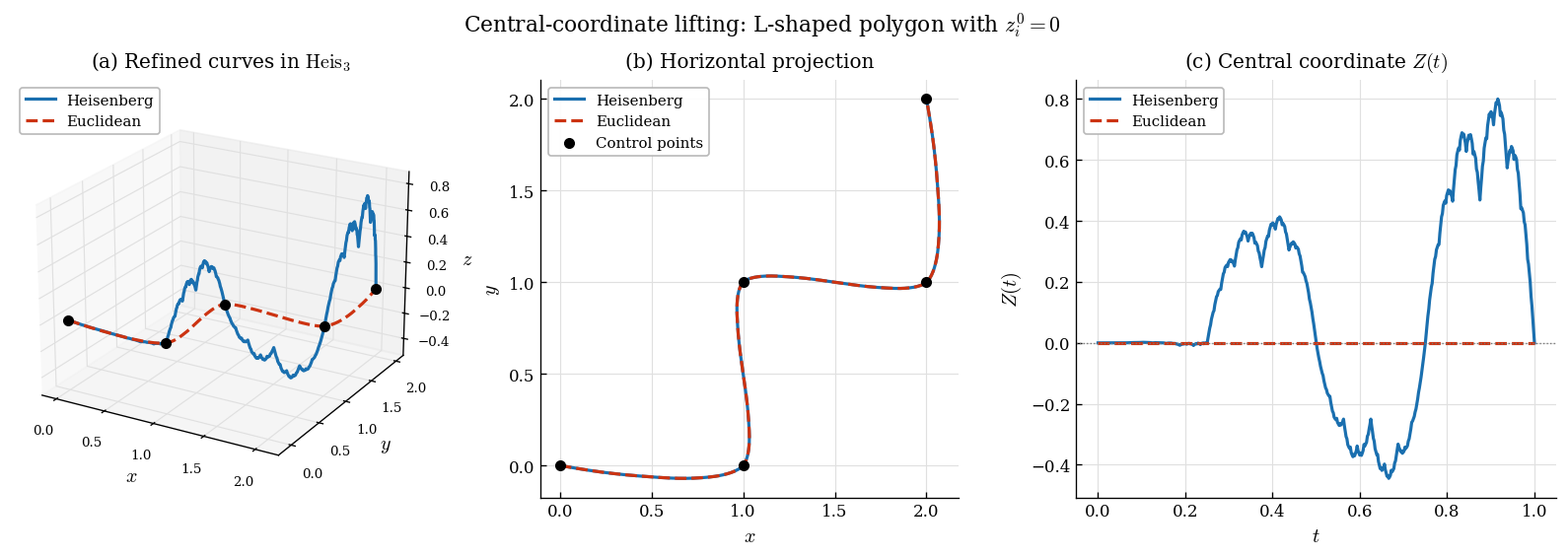}
  \caption{Central-coordinate lifting. \textbf{Left:} 3D refined curves, the
    Euclidean scheme (red dashed) stays in $Z=0$, the Heisenberg scheme (blue)
    lifts away near the corner. \textbf{Middle:} Horizontal projections are nearly
    identical, confirming that the noncommutative correction does not disturb
    the planar geometry (the horizontal limit is the classical $C^1$ four-point
    curve). \textbf{Right:} Central coordinate $Z(t)$, Euclidean gives
    $Z\equiv 0$, Heisenberg generates a nonzero profile. Note the visibly rough,
    non-smooth character of the Heisenberg $Z(t)$. This is not a sampling
    artifact but the signature of the loss of $C^1$ regularity established in
    Theorem~\ref{thm:nonC1}, quantified in Figure~\ref{fig:nonsmooth}.}
  \label{fig:corner}
\end{figure}

\subsection{Example 2: Order sensitivity}

Two control polygons with different traversal order, such that,
\begin{align*}
  \mathcal{A}&:(0,0,0)\to(1,0,0)\to(1.5,0.8,0)\to(2.5,0.8,0)\to(3,1.6,0),\\
  \mathcal{B}&:(0,0,0)\to(0.5,0.8,0)\to(1.5,0,0)\to(2,1.6,0)\to(3,0.8,0).
\end{align*}
Figure~\ref{fig:order} shows that $Z_{\mathcal{A}}(t)$ and $Z_{\mathcal{B}}(t)$
differ in both sign and shape, an effect that no Euclidean componentwise scheme
can produce.

\begin{figure}[H]
  \centering
  \includegraphics[width=0.92\linewidth]{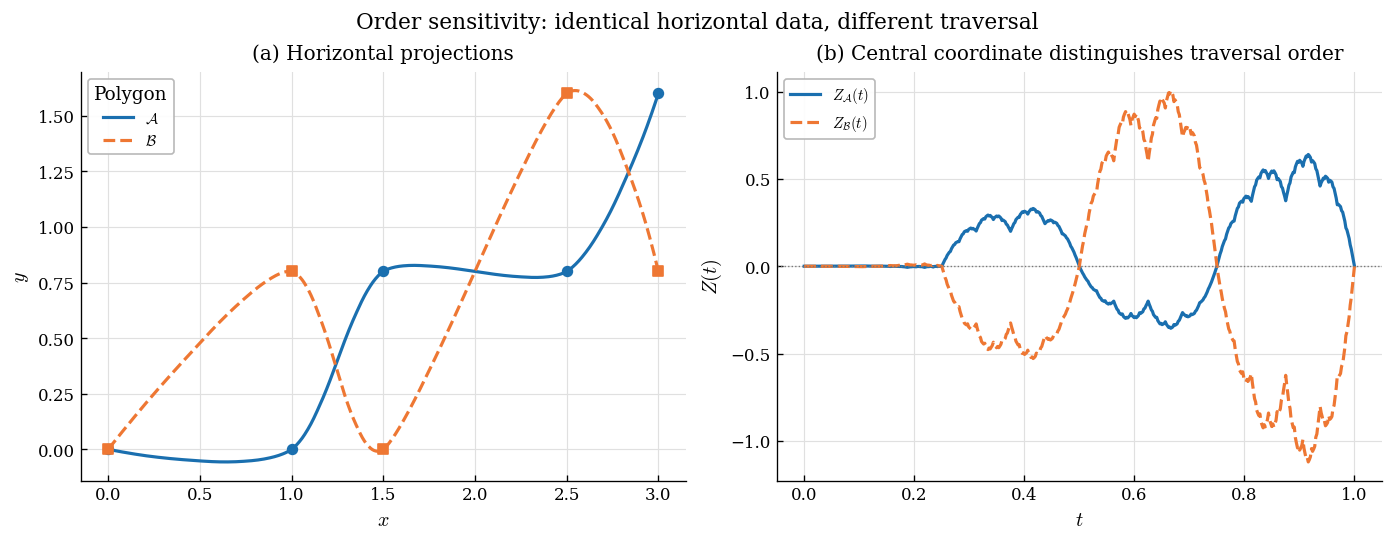}
  \caption{Order sensitivity. \textbf{Left:} Horizontal projections of
    $\mathcal{A}$ (blue) and $\mathcal{B}$ (orange dashed). \textbf{Right:}
    Central coordinates $Z_{\mathcal{A}}(t)$ and $Z_{\mathcal{B}}(t)$ differ
    substantially, the Heisenberg correction distinguishes traversal order.}
  \label{fig:order}
\end{figure}

\begin{table}[H]
  \centering
  \caption{Uniform convergence of $Z^k$ to the level-$12$ reference $Z^*$ for
    the L-shaped data, using piecewise-linear interpolants sampled on the
    reference grid. The ratio $\approx 2$ reflects the $O(2^{-k})$ rate of
    $C^0$ convergence and is consistent with Zygmund (not $C^1$) regularity.}
  \label{tab:convergence}
  \medskip
  \begin{tabular}{@{}ccc@{}}
    \toprule
    Level $k$ & $\sup_t\abs{Z^k-Z^*}$ & Ratio\\
    \midrule
    3&$1.376\times10^{-1}$&---\\
    4&$7.009\times10^{-2}$&1.96\\
    5&$3.530\times10^{-2}$&1.99\\
    6&$1.772\times10^{-2}$&1.99\\
    7&$8.931\times10^{-3}$&1.98\\
    8&$4.271\times10^{-3}$&2.09\\
    \bottomrule
  \end{tabular}
\end{table}

\subsection{Uniform convergence of the central coordinate}

Table~\ref{tab:convergence} records $\sup_t\abs{Z^k(t)-Z^*(t)}$ for the
L-shaped data, where $Z^k$ is the piecewise-linear interpolant of the level-$k$
central values and $Z^*$ is a level-$12$ reference, sampled on the reference
grid. The error halves at each level, confirming the uniform ($C^0$)
convergence of Proposition~\ref{prop:zconv}; Figure~\ref{fig:convergence}
displays the same data on a logarithmic scale. We stress that a ratio
$\approx 2$ here certifies only $C^0$ convergence at rate $O(2^{-k})$; it is
\emph{not} evidence of $C^1$ regularity. Indeed, a $C^0$ Zygmund function has
exactly this $O(2^{-k})$ uniform approximation rate, so Table~\ref{tab:convergence}
and Figure~\ref{fig:convergence} are fully consistent with the non-$C^1$
conclusion of Theorems~\ref{thm:zygmund}--\ref{thm:nonC1}.

\begin{figure}[H]
  \centering
  \includegraphics[width=0.72\linewidth]{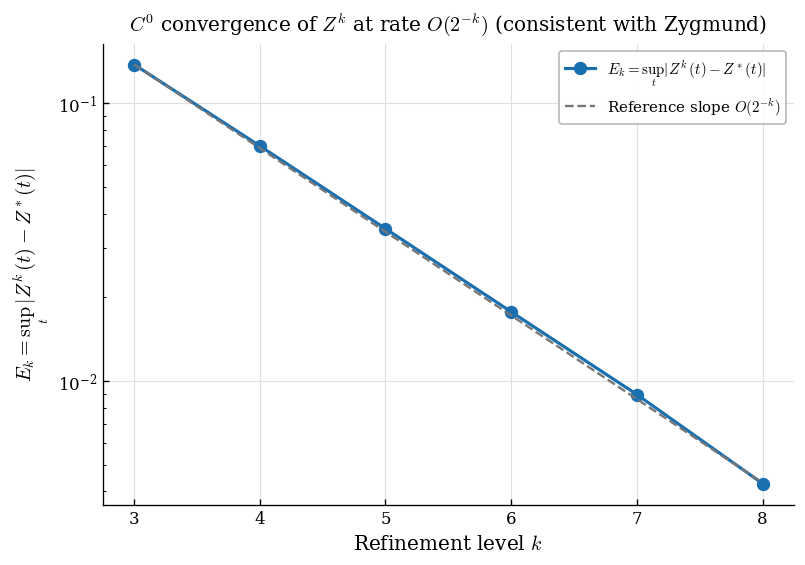}
  \caption{Uniform ($C^0$) convergence of the central coordinate $Z^k$ to the
    level-$12$ reference $Z^*$ for the L-shaped data. The error
    $E_k=\sup_t\abs{Z^k(t)-Z^*(t)}$ decays at the rate $O(2^{-k})$ (the dashed
    reference line has slope $-1$ on the dyadic axis). This rate is exactly that
    of a continuous Zygmund-class limit and is therefore consistent with the
    failure of $C^1$ regularity established in Theorem~\ref{thm:nonC1} and
    Figure~\ref{fig:nonsmooth}; it is not evidence of $C^1$.}
  \label{fig:convergence}
\end{figure}

\begin{figure}[H]
  \centering
  \includegraphics[width=\linewidth]{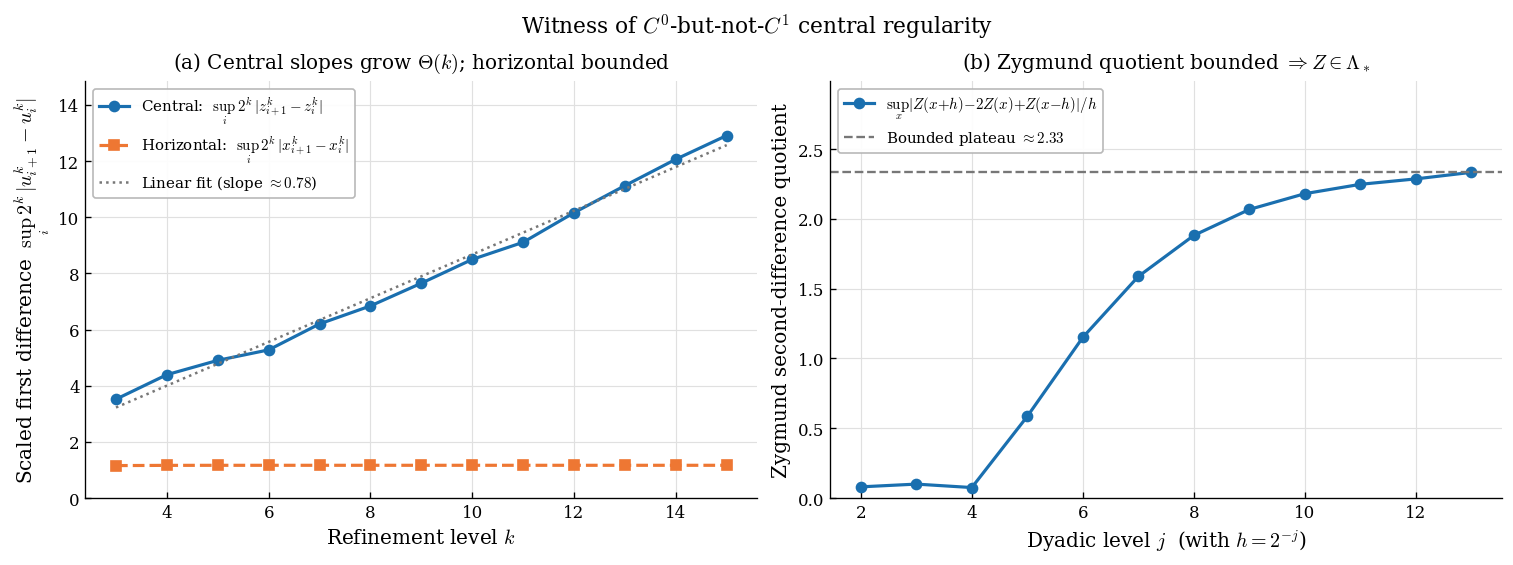}
  \caption{Quantitative witness of $C^0$-but-not-$C^1$ central regularity.
    \textbf{(a)} Scaled central first differences
    $\sup_i 2^k\abs{z_{i+1}^k-z_i^k}$ (blue) grow linearly in the refinement
    level $k$, while the horizontal differences
    $\sup_i 2^k\abs{x_{i+1}^k-x_i^k}$ (orange) stay bounded, and the dotted line is
    a least-squares linear fit over $k=3,\dots,15$ with slope $\approx0.78$ and
    residuals below $3\%$ of the fitted range, supporting $\Theta(k)$ growth,
    which is incompatible with $C^1$. The data are the L-shaped polygon of Example~1,
    open scheme with one-sided index clamping at the two endpoints.
    \textbf{(b)} The Zygmund second-difference quotient
    $\sup_x\abs{Z(x+h)-2Z(x)+Z(x-h)}/h$, $h=2^{-j}$, stays bounded as
    $h\to0$, saturating at a finite plateau $\approx2.3$, numerically illustrating the
    bounded Zygmund quotient proved in Theorem~\ref{thm:zygmund} ($Z\in\Lambda_*$,
    modulus $O(h\log(1/h))$). To remove boundary effects the
    Zygmund test uses the \emph{periodic} scheme on the smooth closed datum
    $X(s)=\cos s+\tfrac15\sin2s$, $Y(s)=\sin s+\tfrac{3}{20}\cos3s$,
    $Z(s)=\tfrac1{10}\cos s$, $s\in[0,2\pi)$, sampled from an initial $16$-gon
    and refined to level $16$; the quotient is evaluated at dyadic scales
    $j=2,\dots,13$.}
  \label{fig:nonsmooth}
\end{figure}

\subsection{Witnessing the loss of $C^1$ regularity}
The decisive numerical signature of Theorem~\ref{thm:nonC1} is the behaviour of
the \emph{scaled} first differences. For a $C^1$ limit, $\sup_i
2^k\abs{u_{i+1}^k-u_i^k}$ converges to $\norm{u'}_\infty$, whereas for the
central channel it instead grows. Figure~\ref{fig:nonsmooth}(a) plots, for the
L-shaped data, the central quantity $\sup_i 2^k\abs{z_{i+1}^k-z_i^k}$ against
the level $k$, alongside the horizontal quantity
$\sup_i 2^k\abs{x_{i+1}^k-x_i^k}$. The horizontal slopes are bounded, consistent
with $X\in C^1$, while the central slopes grow linearly in $k$ with slope
$\approx 0.78$, consistent with $\sup_i 2^k\abs{z_{i+1}^k-z_i^k}=\Theta(k)$ and
hence with $Z\notin C^1$. The strictly positive and stable slope is the
numerical counterpart of the lower-bound mechanism (Lemma~\ref{lem:detail} and
Theorem~\ref{thm:nonC1}). The detail coefficients of the scaled differences
carry the non-decaying forcing of size about $2f$ through the neutral component
of $S$, and their signed partial sums along a dyadic path accumulate linearly,
which is the non-cancellation condition~\eqref{eq:noncancel} in numerical form.
The linear growth is observed in the interior of the parameter interval, away
from the endpoints. The one-sided index clamping used at the two boundary nodes
is standard and does not affect the interior regularity, which is what
Theorem~\ref{thm:nonC1} concerns.

Figure~\ref{fig:nonsmooth}(b) shows the complementary positive fact. The
second-difference quotient $\sup_x\abs{Z(x+h)-2Z(x)+Z(x-h)}/h$ remains bounded
as $h=2^{-j}\to0$, computed on smooth periodic data to remove boundary effects,
saturating at about $2.3$. This numerically illustrates the bounded Zygmund
quotient proved in Theorem~\ref{thm:zygmund}, so that the limit is exactly one
logarithm short of $C^1$, neither smoother nor rougher.

\section{Discussion and extensions}\label{sec:discussion}

The proximity framework~\cite{WallnerDyn05, Wallner06, Grohs10prox,
GrohsWallner09} gives smoothness transfer when a nonlinear scheme is
quadratically close to a linear reference, in the sense that,
\[
\norm{Su-\tilde{S}u}_\infty = O(\norm{\Delta u}_\infty^2).
\]
The Heisenberg correction studied here does not satisfy such a bound. It is only
linearly controlled by the horizontal increments, and this distinction is not a
technical nuisance. It is the analytic reason why proximity theory does not
apply. Indeed, the central limit is not $C^1$ under the non-cancellation
condition of Theorem~\ref{thm:nonC1}, so no $C^1$ regularity-transfer result can
hold for that channel. The failure of quadratic proximity is therefore a
diagnostic of genuine nonsmoothness. The relevant object is instead the exact
two-scale recursion~\eqref{eq:erecursion}, whose forcing is the undifferenced
scaled group-law correction. Tracking this persistent forcing is what reveals
the logarithmic loss of smoothness.

This example clarifies the boundary between positive smoothness-transfer
results and negative regularity phenomena~\cite{XieYu08, XieYu10}. Proximity,
log-exp, and retraction-based schemes are designed so that the nonlinear rule
remains sufficiently close to its linear parent. In the present coordinate
construction, however, the Baker--Campbell--Hausdorff correction is not
quadratically small in the data increments. The result should therefore not be
read as a failure of the proximity framework, but as an example outside its
hypotheses. In this example, the missing quadratic smallness is exactly what
allows the central channel to lose one logarithmic order of smoothness.

The mechanism is explicit. The classical $C^1$ four-point mask is transplanted
unchanged into the horizontal coordinates, and these channels keep their
regularity exactly. The group law then contributes the central correction,
\[
R_i^k=\tfrac12(x_i^k b_i^k-y_i^k a_i^k),
\]
the second-order Baker--Campbell--Hausdorff term in exponential coordinates.
The central channel is therefore a four-point scheme driven by an additional
forcing. After scaling first differences, this forcing has amplitude,
\[
2^k R_i^k \longrightarrow \tfrac14(XY'-YX'),
\]
which is generally nonzero and hence does not decay with the refinement level.
It is fed through the scaled operator $S=2D_1$, whose neutral component does not
damp the corresponding detail. Thus a fresh alternating oscillation of
essentially fixed amplitude is injected at every scale. The second differences
remain bounded because they are governed by the genuinely contractive operator
$D_2$, but the first differences can accumulate linearly. This is precisely the
borderline behaviour that leads to Zygmund regularity rather than $C^1$
regularity.

The lesson is cautionary. Compatibility with a noncommutative group law is not,
by itself, a guarantee of smoothness. A correction may be harmless at one
refinement level and still become regularity-destroying when it is repeated at
all levels with non-decaying amplitude. The limiting coefficient in the forcing
is $\tfrac14$, and it is independent of the mask parameter
$\omega\in(0,\tfrac18)$. This independence follows from the cancellation of the
$\pm\omega$ terms in the limit of $2^k a_i^k$; see Lemma~\ref{lem:offset}. The
loss of $C^1$ is therefore not an artefact of the particular choice
$\omega=1/16$.

The present construction should also be distinguished from geodesic,
log-exp, and retraction-based Lie-group subdivision schemes
~\cite{ParkRavani95, CrouchLeite95, CrouchKunLeite99, NavYazPol13,
WallnerNYGrohs07, XieYu10, NavYazYu11, DuchampXieYu13}. Those schemes are
typically designed so that the nonlinear refinement inherits the regularity of
a linear reference under suitable smoothness-equivalence or proximity
hypotheses. Our scheme is different. It is interpolatory, coordinate-defined,
and not left-equivariant; its horizontal components evolve by the standard
Euclidean four-point symbol, while its central component is corrected directly
by the group law. The negative result should therefore be read as a warning for
coordinate or basepointed group-law constructions, not as a negative statement
about all Lie-group subdivision schemes. The Heisenberg group admits
left-invariant Riemannian metrics and has a natural sub-Riemannian geometry
given by the horizontal distribution spanned by $X$ and $Y$, but the present
scheme does not depend on a particular metric.

A natural next question is how to modify the construction so that the central
channel becomes $C^1$. The analysis points to a clear requirement: the forcing
in the scaled first-difference recursion must be summable across refinement
levels. This could be achieved, for example, by attenuating the BCH correction
with scale, or by replacing the full correction with a genuinely second-order
difference-of-corrections term. A corrected Heisenberg scheme whose nonlinear
part is quadratically close to a linear reference would fall back within the
scope of proximity-type regularity transfer~\cite{Grohs10prox, GrohsWallner09,
DynSharon17a, HuningWallner19}. Designing such a scheme, proving its
regularity, and identifying the resulting central derivative remain natural
problems for future work. Higher-order horizontal masks, such as six-point or
eight-point schemes, would improve the regularity of $X$ and $Y$, but they
would not by themselves remove the central obstruction, since the obstruction is
caused by the non-decay of the forcing rather than by insufficient smoothness of
the horizontal mask.

The results in this paper concern curves. A surface analogue could be built by
applying four-point refinement in two parameter directions and adding group-law
corrections at inserted vertices. The present analysis suggests that the
central channel would again be the delicate part. This question is relevant to
geometric design, where subdivision, partial differential equation surfaces,
B\'ezier patches, and mesh approximation provide complementary ways of
constructing smooth geometry from sparse control data~\cite{BloorWilson89,
UgailBW99a, KubiesaUW04, MonterdeUgail06, GonzalezUgail08, ShengUgail10}.
Finally, the order-sensitive correction $R_i^k$ remains a natural
noncommutative building block for sequential data. Its naive multiscale
iteration, however, should be used with care: it produces Zygmund-class output
rather than a smooth central curve.

\section{Conclusion}\label{sec:conclusion}

In this paper, we have introduced an interpolatory subdivision scheme for control polygons
with values in the three-dimensional Heisenberg group and analysed the
regularity of the resulting limit curve. The construction is deliberately
explicit. The two horizontal coordinates are refined by the classical four-point
scheme, while the central coordinate is modified by a closed-form correction
derived from the Heisenberg group law. Because of the block-triangular structure
of the refinement rule, the horizontal limit is exactly the classical
four-point limit and retains its $C^1$ regularity. Thus, the genuinely
noncommutative effect appears only in the central channel.

The central-channel analysis is governed by an exact two-scale recursion for
the scaled central first differences. The forcing term in this recursion is the
scaled group-law correction itself, not a difference of corrections. Its
amplitude converges to a generally nonzero limit and therefore persists across
refinement levels. Passing to second differences, and using the intertwining
between the scaled first-difference operator and its contractive
second-difference scheme gives the positive part of the result: the central
coordinate converges uniformly to a continuous limit in the Zygmund class, with
modulus of continuity of order $h\log(1/h)$. Equivalently, the scaled first
differences satisfy the unconditional upper bound $O(k)$ at refinement level
$k$. Under an explicit signed non-cancellation condition on the detail
coefficients, a matching linear lower bound holds. In that case, the scaled
first differences grow like $\Theta(k)$, and the central limit is not
continuously differentiable. The numerical examples illustrate this behaviour
for typical nondegenerate data.

The broader conclusion is cautionary. Compatibility with a noncommutative group
law is not, by itself, a guarantee of smoothness. A correction that is benign at
one refinement level can destroy the regularity of the limit when it is injected
with non-decaying amplitude at every scale. This does not contradict proximity or
log-exp smoothness-transfer theory; rather, it identifies a mechanism outside
their hypotheses. In the present coordinate scheme, the correction is only
linearly close to the Euclidean reference rule, whereas proximity-based
$C^1$ transfer requires a quadratic closeness estimate. Natural next steps are
to design a modified Heisenberg scheme whose central correction is
second-order close to a linear reference, to extend the analysis to surface
subdivision, and to determine whether the signed non-cancellation condition
holds generically.

\section*{Acknowledgments}
The authors would like to thank the Centre for Visual Computing and Intelligent
Systems at the University of Bradford for providing the computing resources used
in this work.

\section*{Code and data availability}
The complete numerical code, the figure-generation pipeline, and all per-level
and per-condition results required to reproduce every experiment reported in this
paper are openly available at \url{https://github.com/ugail/Heisenberg-Subdivision}.


\end{document}